\numberwithin{equation}{section}
\newtheorem{lem}{Lemma}[section]
\newtheorem{thm}{Theorem}[section]
\newtheorem{corollary}{Corollary}[section]
\newtheorem{proposition}{Proposition}[section]
\newtheorem{rem}{Remark}[section]
\DeclareMathOperator{\arcsinh}{arcsinh}
\title{Lyapunov-type inequalities for a Sturm-Liouville problem of the
 one-dimensional $p$-Laplacian}
\author[S. Takeuchi]{Shingo Takeuchi${}^\dag$}
\author[K. Watanabe]{Kohtaro Watanabe${}^\ddag$}
\email[Shingo Takeuchi]{shingo@shibaura-it.ac.jp}
\email[Kohtaro Watanabe]{wata@nda.ac.jp}
\address[Shingo Takeuchi]{Department of Mathematical Sciences, 
Shibaura Institute of Technology,
307 Fukasaku, Minuma-ku, Saitama-shi, Saitama 337-8570, Japan}
\address[Kohtaro Watanabe]{
Department of Computer Science, National Defense Academy, 
1-10-20 Hashirimizu, Yokosuka 239-8686, Japan}
\thanks{$\dag$ This work is partially supported by 
the Grant-in-Aid for Scientific Research (C) (No. 17K05336)
from Japan Society for the Promotion of Science.\\
\indent
$\ddag$ This work is partially supported by 
the Grant-in-Aid for Scientific Research (C) (No. 18K03387)
from Japan Society for the Promotion of Science.}
\subjclass[2010]{}
\begin{document}
\begin{abstract}
This article considers the eigenvalue problem for the Sturm-Liouville problem including $p$-Laplacian
\begin{align*}
\begin{cases}
\left(\vert u'\vert^{p-2}u'\right)'+\left(\lambda+r(x)\right)\vert u\vert ^{p-2}u=0,\,\, x\in (0,\pi_{p}),\\
u(0)=u(\pi_{p})=0,
\end{cases}
\end{align*} 
where $1<p<\infty$, $\pi_{p}$ is the generalized $\pi$ given by $\pi_{p}=2\pi/\left(p\sin(\pi/p)\right)$, $r\in C[0,\pi_{p}]$ and $\lambda<p-1$.
Sharp Lyapunov-type inequalities, which are necessary conditions for the existence of nontrivial solutions of the above problem are presented.
Results are obtained through the analysis of variational problem related to a sharp Sobolev embedding and generalized trigonometric and hyperbolic functions. 
\end{abstract}
\keywords{Lyapunov-type inequality, $p$-Laplacian, generalized trigonometric functions, generalized hyperbolic functions, Sharp Sobolev inequality}
\maketitle
\section{Introduction}
Let $1<p<\infty$. This article considers the eigenvalue problem for the Sturm-Liouville problem including $p$-Laplacian:
\begin{align}\label{SLP}
\begin{cases}
\left(\vert u'\vert^{p-2}u'\right)'+\left(\lambda+r(x)\right)\vert u\vert ^{p-2}u=0,\quad x\in (0,\pi_{p}),\\
u(0)=u(\pi_{p})=0,
\end{cases}
\end{align} 
where $\pi_{p}$ is the generalized $\pi$ given by 
\begin{align}\label{PIp}
\pi_{p}=2\int_{0}^{1}\frac{dt}{(1-t^p)^{1/p}}=\frac{2\pi}{p\sin{(\pi/p)}},
\end{align}
$r\in C[0,\pi_{p}]$ and $\lambda<p-1$. 
We present sharp Lyapunov-type inequalities for \eqref{SLP}, which are necessary conditions for the existence of nontrivial solutions of \eqref{SLP}.
Here, a function $u$ is called a solution of \eqref{SLP} if 
$u \in W^{1,p}_0(0,\pi_p)$ and $u$ satisfies the equation in \eqref{SLP}
in the distribution sense. It is easily seen that the solution $u$ of \eqref{SLP}
has the smoothness properties $u,\ |u'|^{p-2}u' \in C^1[0,\pi_p]$
and therefore satisfies \eqref{SLP} in the classical sense.

In the case $p=2$, the necessary conditions for the existence of 
nontrivial solutions of \eqref{SLP} are known as
\begin{align}\label{lyapunov2}
\|r_{+}\|_{L^{1}(0,\pi)}>
\begin{cases}
2\sqrt{\lambda}\cot\left(\frac{\sqrt{\lambda}\pi}{2}\right),&0<\lambda<1,\\
\frac{4}{\pi},&\lambda=0,\\
2\sqrt{-\lambda}\coth \left(\frac{\sqrt{-\lambda}\pi}{2}\right),&\lambda<0,
\end{cases}
\end{align}
where $r_{+}(x)=\max (r(x),0 )$. 
Especially, the inequality \eqref{lyapunov2} for $\lambda=0$ is originally called the \textit{Lyapunov inequality} 
and one can see several proofs of the inequality in Chapter 1 of Pinasco \cite{Pinasco2013} (see also Ca\~{n}ada and Villegas \cite{Canada-Villegas2015}
for the $L^q$-norm of $r_+$, $1 \leq q \leq \infty$).  
The inequalities for $\lambda \in (0,1)$ and $\lambda \in (-\infty,0)$ were obtained by Borg \cite{Borg1944}
(see also Ha \cite{Ha1998} for $\lambda \in (0,\infty)$), 
and rely on the construction of Green's function (the reproducing kernel for $H^{1}_{0}(0,\pi)$) for the problem:
\begin{align*}
\begin{cases}
u''+\lambda u=0,\quad x\in(0,\pi),\\
u(0)=u(\pi)=0.
\end{cases}
\end{align*}

On the other hand, for problem \eqref{SLP} with general $p \in (1,\infty)$ and $p\neq 2$, we cannot take this approach due to the lack of Green's function. 
This might be the reason why sharp Lyapunov-type inequalities (with concrete constants as \eqref{lyapunov2}) currently available for \eqref{SLP} are limited to the case $\lambda=0$ (see Pinasco \cite{Pinasco2004,Pinasco2013} and Watanabe \cite{Watanabe2012}). 

In this paper, we will obtain sharp Lyapunov-type inequalities for \eqref{SLP} 
in the cases of $0<\lambda<p-1$ and $\lambda<0$, respectively. 
These estimates are obtained through the analysis of variational problem related to a sharp Sobolev embedding and generalized trigonometric and hyperbolic functions.

\section{Main Results}
To state the main results, we introduce the definitions and some properties 
of generalized trigonometric and hyperbolic functions 
(see \cite{Kobayashi-Takeuchi,Takeuchi2019,Klen,YHWL2019} and the references given there for more details). 

For $p \in (1,\infty)$, the generalized function $\arcsin_{p}x$ is defined as
\begin{align}\label{arcsinp}
\arcsin_{p}{x}=\int_{0}^{x}\frac{1}{\left(1-t^{p}\right)^{1/p}}\,dt,\quad 0\leq x\leq1,
\end{align}
and hence it follows from \eqref{PIp} that $\arcsin_{p}{1}=\pi_{p}/2$.
The function $\sin_{p}$ is defined as the inverse of $\arcsin_{p}$ on $[0,\pi_{p}/2]$ and its definition domain can be extended to ${\Bbb R}$ 
as a $2\pi_p$-periodic function by means of $\sin_p{x}=\sin_p{(\pi_p-x)}$
and $\sin_p{(-x)}=-\sin_p{x}$.
We see at once that $\sin_p{x}$ is a smooth function in ${\Bbb R}$
and $\sin_{p}x=0$ if and only if $x=n\pi_{p}\ (n\in {\Bbb Z})$.
Thus, the function $\cos_{p}{x}$ is defined as
\begin{align}\label{diff2}
\cos_{p}x=\left(\sin_{p}x\right)', \quad x \in {\Bbb R},
\end{align}
and we put 
\begin{align*}
\cot_{p}x=\frac{\cos_{p}x}{\sin_{p}x},\quad x\in {\Bbb R}\setminus 
\{n\pi_{p}\,|\, n\in {\Bbb Z}\}.
\end{align*}
From \eqref{arcsinp}, the $p$-Pythagorean identity follows:
\begin{align}\label{plusone}
\vert\cos_{p}x\vert^p + \vert\sin_{p}x\vert^{p}=1,\quad x\in {\Bbb R}.
\end{align}
Moreover, from \eqref{plusone}, $\sin_{p}{x}$
is a unique solution to the initial value problem:
\begin{align}\label{ssh}
\begin{cases}
\left(\vert u'\vert^{p-2}u'\right)'+(p-1)\vert u\vert^{p-2}u=0,\\
u(0)=0,\, u'(0)=1.
\end{cases}
\end{align}
Especially, $\lambda_1=p-1$ is the first eigenvalue 
and $\sin_{p}{x}$ is the first eigenfunction of
\begin{align*}
\begin{cases}
\left(\vert u'\vert^{p-2}u'\right)'+\lambda\vert u\vert^{p-2}u=0,\\
u(0)=u(\pi_{p})=0.
\end{cases}
\end{align*}

Similarly, the function $\arcsinh_p{x}$ is defined as
\begin{align}\label{arcsinhp}
\arcsinh_{p}{x}=\int_{0}^{x}\frac{1}{\left(1+t^{p}\right)^{1/p}}\,dt,\quad 0\leq x<\infty.
\end{align}
Since $\arcsinh_{p}{x} \to \infty$ as $x \to \infty$, 
the function $\sinh_{p}$ is defined as the inverse of $\arcsinh_p$ on $[0,\infty)$
and its definition domain can be extended to ${\Bbb R}$ 
as an odd function by means of $\sinh_p{(-x)}=-\sinh_p{x}$.
We see at once that $\sinh_p{x}$ is a monotone increasing 
smooth function in ${\Bbb R}$
and $\sinh_{p}x=0$ if and only if $x=0$.
Thus, the function $\cosh_{p}{x}$ is defined as
\begin{align}\label{diff2h}
\cosh_{p}x=\left(\sinh_{p}x\right)', \quad x \in {\Bbb R},
\end{align}
and we put
\begin{align*}
\coth_{p}x=\frac{\cosh_{p}x}{\sinh_{p}x},\quad x\in {\Bbb R} \setminus \{0\}.
\end{align*}
In this case, from \eqref{arcsinhp}, the $p$-Pythagorean-like identity follows:
\begin{align}\label{minusone}
\left(\cosh_{p}x\right)^p - \vert \sinh_{p}x \vert^{p}=1,\quad x\in {\Bbb R}.
\end{align}
Moreover, from \eqref{minusone}, $\sinh_{p}{x}$
is a unique solution to the initial value problem:
\begin{align}\label{ssh2}
\begin{cases}
\left(\vert u'\vert^{p-2}u'\right)'-(p-1)\vert u\vert^{p-2}u=0,\\
u(0)=0,\, u'(0)=1.
\end{cases}
\end{align}

Now we are in a position to state our main results.
\begin{thm}\label{thm1}
Suppose $1<p<\infty$, $\lambda<\lambda_1$, and \eqref{SLP} has a nontrivial solution.
Then, it holds
\begin{align}\label{LyapunovLP}
\|r_{+}\|_{L^{1}(0,\pi_{p})}>
\begin{cases}
2\left(\frac{\lambda}{\lambda_1}\right)^{\frac{p-1}{p}}\left(\cot_{p}\left(\frac{\pi_p}{2}\left(\frac{\lambda}{\lambda_1}\right)^{1/p}\right)\right)^{p-1},& 0<\lambda<\lambda_1,\\
\frac{2^{p}}{\pi_{p}^{p-1}}, &\lambda=0,\\
2\left(\frac{-\lambda}{\lambda_1}\right)^{\frac{p-1}{p}}\left(\coth_{p}\left(\frac{\pi_p}{2}\left(\frac{-\lambda}{\lambda_1}\right)^{1/p}\right)\right)^{p-1},& \lambda<0.
\end{cases}
\end{align}
Moreover, 
the estimate above is sharp, in the sense that there exists $r\in C[0,\pi_{p}]$ 
for which \eqref{SLP} has a nontrivial solution such that the left-hand side of \eqref{LyapunovLP} can be arbitrarily closed to the right-hand side.  
\end{thm}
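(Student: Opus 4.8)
\section*{Proof proposal}

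The plan is to recast the Lyapunov inequality as the best constant in a sharp $L^\infty$ Sobolev embedding. If $u$ is a nontrivial solution of \eqref{SLP}, multiplying the equation by $u$ and integrating by parts over $(0,\pi_p)$ (the boundary terms vanish since $u\in W_0^{1,p}(0,\pi_p)$) yields the energy identity
\begin{equation*}
\int_0^{\pi_p}|u'|^p\,dx-\lambda\int_0^{\pi_p}|u|^p\,dx=\int_0^{\pi_p}r\,|u|^p\,dx\le\|r_+\|_{L^1(0,\pi_p)}\,\|u\|_{L^\infty(0,\pi_p)}^p.
\end{equation*}
Hence everything reduces to identifying the best constant
\begin{equation*}
C_p(\lambda):=\inf_{v\in W_0^{1,p}(0,\pi_p)\setminus\{0\}}\frac{\int_0^{\pi_p}|v'|^p\,dx-\lambda\int_0^{\pi_p}|v|^p\,dx}{\|v\|_{L^\infty(0,\pi_p)}^p},
\end{equation*}
which is strictly positive for $\lambda<\lambda_1$ because $\int|v'|^p\ge\lambda_1\int|v|^p$ (as $\lambda_1=p-1$ is the first eigenvalue). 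The bound $\|r_+\|_{L^1}\ge C_p(\lambda)$ is then immediate, and the three cases of \eqref{LyapunovLP} follow once $C_p(\lambda)$ is computed.

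To evaluate $C_p(\lambda)$ I would first produce a minimizer by the direct method: a minimizing sequence is bounded in $W_0^{1,p}$, and since in one dimension the embedding $W_0^{1,p}(0,\pi_p)\hookrightarrow C[0,\pi_p]$ is compact, the $L^\infty$-constraint and the term $\int|v|^p$ pass to the limit while $\int|v'|^p$ is weakly lower semicontinuous; replacing $v$ by $|v|$ lets me take the minimizer $u^*\ge0$ with $u^*(x_0)=\|u^*\|_\infty$ for some $x_0\in(0,\pi_p)$. On each of $(0,x_0)$ and $(x_0,\pi_p)$ the constraint is inactive, so $u^*$ solves the homogeneous Euler--Lagrange equation $(|v'|^{p-2}v')'+\lambda|v|^{p-2}v=0$; by the scaling computation based on \eqref{ssh} and \eqref{ssh2}, its branches are $A\sin_p(\omega x)$ with $\omega=(\lambda/\lambda_1)^{1/p}$ for $0<\lambda<\lambda_1$, $A\sinh_p(\mu x)$ with $\mu=(-\lambda/\lambda_1)^{1/p}$ for $\lambda<0$, and affine for $\lambda=0$. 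A symmetry/convexity argument fixes the contact point at the midpoint $x_0=\pi_p/2$. The decisive evaluation is an integration-by-parts identity from \eqref{plusone}: using $\cos_p=(\sin_p)'$ together with $(\cos_p^{p-1})'=-(p-1)\sin_p^{p-1}$ on $[0,\pi_p/2]$ one obtains, for $0\le\theta\le\pi_p/2$,
\begin{equation*}
\int_0^{\theta}\cos_p^{p}y\,dy-(p-1)\int_0^{\theta}\sin_p^{p}y\,dy=\sin_p\theta\,\cos_p^{p-1}\theta,
\end{equation*}
and the hyperbolic analogue from \eqref{minusone}, $\int_0^{\phi}\cosh_p^{p}y\,dy+(p-1)\int_0^{\phi}\sinh_p^{p}y\,dy=\sinh_p\phi\,\cosh_p^{p-1}\phi$. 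Inserting the symmetric profiles and normalizing $\|u^*\|_\infty=1$ (so $A=1/\sin_p\theta$) collapses the energy to $2\omega^{p-1}(\cot_p\theta)^{p-1}$ with $\theta=\frac{\pi_p}{2}(\lambda/\lambda_1)^{1/p}$ (respectively to $2\mu^{p-1}(\coth_p\phi)^{p-1}$ with $\phi=\frac{\pi_p}{2}(-\lambda/\lambda_1)^{1/p}$, and to $2^p/\pi_p^{p-1}$ for the affine tent), which are exactly the right-hand sides of \eqref{LyapunovLP}. The main obstacle is precisely this step: rigorously justifying that the symmetric single-peaked profile is the global minimizer, i.e.\ ruling out flat tops, off-center peaks, and multiple peaks, and confirming the midpoint is optimal.

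The strict inequality comes for free from the shape of the minimizer. At its maximum $u^*$ has a genuine corner: the one-sided derivatives are $\pm A\omega\cos_p\theta\neq0$ (respectively $\pm B\mu\cosh_p\phi\neq0$ and $\pm 2/\pi_p$), so $|u^{*\prime}|^{p-2}u^{*\prime}$ jumps and $u^*\notin C^1[0,\pi_p]$. Since every nontrivial solution of \eqref{SLP} satisfies $u,\,|u'|^{p-2}u'\in C^1[0,\pi_p]$, such a $u$ can never be a minimizer, so the displayed chain of inequalities is strict and the strict sign in \eqref{LyapunovLP} follows.

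Sharpness is the mirror image of the same corner. The jump of $|u^{*\prime}|^{p-2}u^{*\prime}$ across $\pi_p/2$ equals $-C_p(\lambda)\,|u^*(\pi_p/2)|^{p-2}u^*(\pi_p/2)$, so $u^*$ solves \eqref{SLP} in the distributional sense with the singular weight $r=C_p(\lambda)\,\delta_{\pi_p/2}$ of total mass $C_p(\lambda)$. Approximating this Dirac mass by nonnegative continuous bumps $r_n\in C[0,\pi_p]$ concentrating at $\pi_p/2$ with $\int r_n\to C_p(\lambda)$ and solving the corresponding problems produces genuine solutions $u_n\to u^*$ with $\|r_{n,+}\|_{L^1}\downarrow C_p(\lambda)$, showing the constants in \eqref{LyapunovLP} cannot be improved.
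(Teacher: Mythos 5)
Your overall strategy coincides with the paper's: reduce \eqref{LyapunovLP} to the sharp constant $C_p(\lambda)=\inf J(u)/\|u\|_{L^\infty}^p$, obtain a minimizer by the direct method, identify it as a two-branch $\sin_p$/affine/$\sinh_p$ profile peaking at $\pi_p/2$, and evaluate the energy via the integral identities (your identities and the resulting values $2\omega^{p-1}(\cot_p\theta)^{p-1}$, $2^p/\pi_p^{p-1}$, $2\mu^{p-1}(\coth_p\phi)^{p-1}$ all check out). But the step you yourself flag as ``the main obstacle'' --- rigorously showing that the symmetric single-peaked profile is the global minimizer of the $L^\infty$-constrained problem, ruling out flat tops, off-center peaks and multiple peaks --- is a genuine gap, not a technicality: the constraint $\max|u|=u(x_0)$ is a unilateral obstacle, so the Euler--Lagrange equation is only available where the constraint is inactive, and that set is not known a priori. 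The paper circumvents this entirely by a relaxation: for each $y$ it minimizes $J$ over $W(y)=\{u\in W^{1,p}_0:\,u(y)=1\}$ \emph{without} requiring $y$ to be the maximum point. This relaxed problem has free Euler--Lagrange equations on $(0,y)$ and $(y,\pi_p)$ with Dirichlet data only, hence an explicit minimizer $u_y$ and an explicit value $F(y)$; one then shows $F$ is minimized at $y=\pi_p/2$ (via monotonicity/convexity of $f(x)=K^{p-1}|\cot_p Kx|^{p-2}\cot_p Kx$, or Jensen in the other cases), and closes the sandwich $\inf_y\inf_{W(y)}J\le\tilde C(p,\lambda)\le\inf_{M(\pi_p/2)}J$ by observing that the relaxed minimizer $u_{\pi_p/2}$ happens to attain its maximum at $\pi_p/2$, hence is admissible for the constrained problem. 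Note that for intermediate $y$ and $\lambda$ close to $\lambda_1$ the relaxed minimizer $u_y$ genuinely fails the max constraint (the paper's Remark 3.3), which is exactly why the constrained problem resists a direct attack and why the sandwich is the right tool. You need this device (or an equivalent one) to complete your argument.

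Two smaller points. First, your strict-inequality argument (solutions of \eqref{SLP} are $C^1$ with $|u'|^{p-2}u'\in C^1$, minimizers have a corner, hence no solution is a minimizer) is a legitimate alternative to the paper's, but it presupposes that \emph{every} minimizer has a corner, i.e.\ it inherits the very characterization gap above; the paper instead breaks a different link in the chain, showing $\int r_+|u|^p<\|u\|_\infty^p\int r_+$ by deriving a contradiction with the continuity of $r_+$ from the equality case. Second, in your sharpness argument, ``solving the corresponding problems produces genuine solutions'' is not automatic if you prescribe $\int r_n= C_p(\lambda)$ in advance: for a given continuous $r$, \eqref{SLP} generically has no nontrivial solution. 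The paper fixes this by taking $r=\alpha(\delta)r_\delta$ where $\alpha(\delta)$ is the first eigenvalue of the weighted problem with weight $r_\delta$ (a tent of unit mass at the peak of $u_*$), which guarantees existence of a solution by construction, and then shows $\alpha(\delta)=\int r<\tilde C(p,\lambda)+\epsilon$ by testing with $u_*$. You should adopt that normalization.
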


The inequality \eqref{LyapunovLP} is a fairly straightforward generalization of \eqref{lyapunov2}.
Here, the estimate for $\lambda=0$ is originally due to Elbert \cite{Elbert1981}
(see also Pinasco \cite{Pinasco2004,Pinasco2013}),
but our proof will be an alternative to those of \cite{Elbert1981,Pinasco2004,Pinasco2013}.

For $\lambda<\lambda_1$, we denote by $C(p,\lambda)$ the right-hand side of \eqref{LyapunovLP}. 
The next result shows that $C(p,\lambda)$ is the best constant of some Sobolev-type inequality. 
\begin{corollary}\label{cor1}
Suppose $1<p<\infty$ and $\lambda<\lambda_1$, then for $u\in W^{1,p}_{0}(0,\pi_{p})$, the Sobolev-type inequality holds:
\begin{align}\label{Sobolev}
C(p,\lambda)\left(\max_{x \in [0,\pi_{p}]}\vert u(x)\vert\right)^{p}\leq \int_{0}^{\pi_{p}}\vert u'\vert^{p}\,dx-\lambda\int_{0}^{\pi_{p}}\vert u\vert^{p}\,dx.
\end{align}
Moreover, the constant $C(p,\lambda)$ is sharp.
\end{corollary}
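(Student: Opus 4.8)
The plan is to recast the corollary as the single variational identity
\[
C(p,\lambda)=\inf_{u\in W^{1,p}_0(0,\pi_p)\setminus\{0\}}\frac{\int_0^{\pi_p}|u'|^p\,dx-\lambda\int_0^{\pi_p}|u|^p\,dx}{\bigl(\max_{[0,\pi_p]}|u|\bigr)^p}=:\Lambda(p,\lambda),
\]
since \eqref{Sobolev} is exactly the lower bound $\Lambda(p,\lambda)\ge C(p,\lambda)$, while sharpness is the reverse inequality together with attainment. Because $\lambda<\lambda_1$, the numerator is a positive, coercive, weakly lower semicontinuous functional on $W^{1,p}_0(0,\pi_p)$ (for $0<\lambda<\lambda_1$ one has $\int|u'|^p-\lambda\int|u|^p\ge(1-\lambda/\lambda_1)\int|u'|^p$ from the first eigenvalue estimate, and the case $\lambda\le0$ is immediate), and the compact embedding $W^{1,p}_0(0,\pi_p)\hookrightarrow C[0,\pi_p]$ makes the sup-norm denominator continuous along minimizing sequences. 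Hence a nonnegative minimizer $u_0$ exists, which I would normalize by $\max_{[0,\pi_p]}u_0=u_0(x_0)=1$ for some $x_0\in(0,\pi_p)$.

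First I would pin down the extremal explicitly. On each of $(0,x_0)$ and $(x_0,\pi_p)$ the constraint is inactive, so the Euler--Lagrange equation is the homogeneous $(|u_0'|^{p-2}u_0')'+\lambda|u_0|^{p-2}u_0=0$ with $u_0(0)=u_0(\pi_p)=0$; by the characterizations \eqref{ssh} and \eqref{ssh2} its positive solutions are $u_0=A\sin_p(\omega x)$ with $\omega=(\lambda/\lambda_1)^{1/p}$ when $0<\lambda<\lambda_1$, an affine function when $\lambda=0$, and $u_0=A\sinh_p(\sigma x)$ with $\sigma=(-\lambda/\lambda_1)^{1/p}$ when $\lambda<0$. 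The sup-norm constraint contributes a single point mass at $x_0$, that is a downward jump of $|u_0'|^{p-2}u_0'$ equal to the multiplier $\Lambda(p,\lambda)$; symmetry of this matching condition forces $x_0=\pi_p/2$, giving the symmetric profile $u_*(x)=\sin_p(\omega x)$ on $[0,\pi_p/2]$ (and the affine and $\sinh_p$ analogues), extended by $u_*(x)=u_*(\pi_p-x)$. Evaluating the quotient on $u_*$ then returns exactly $C(p,\lambda)$: writing $a=\omega\pi_p/2$, using $\omega^p=\lambda/\lambda_1$ and \eqref{plusone}, the numerator collapses through the antiderivative identity
\[
\frac{d}{dt}\Bigl(\sin_p t\,(\cos_p t)^{p-1}\Bigr)=1-p(\sin_p t)^p
\]
to $2\omega^{p-1}\sin_p a\,(\cos_p a)^{p-1}$, while the denominator is $(\sin_p a)^p$, so the ratio equals $2\omega^{p-1}(\cot_p a)^{p-1}=C(p,\lambda)$; the case $\lambda=0$ is a direct computation and the case $\lambda<0$ is identical, using \eqref{minusone} and the companion identity $\frac{d}{dt}(\sinh_p t\,(\cosh_p t)^{p-1})=1+p(\sinh_p t)^p$, which produces the $\coth_p$ expression. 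This simultaneously yields $\Lambda(p,\lambda)\le C(p,\lambda)$ and identifies the extremal, so the constant in \eqref{Sobolev} cannot be lowered.

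The remaining and main point is the inequality $\Lambda(p,\lambda)\ge C(p,\lambda)$, and here I would invoke Theorem \ref{thm1}. If the quotient dipped below $C(p,\lambda)$ for some admissible $u$, I would realize an approximation of $u$ as a genuine nontrivial solution of \eqref{SLP}: smearing the Euler--Lagrange point mass at $x_0$ into a nonnegative $r_\varepsilon\in C[0,\pi_p]$ whose integral equals the quotient value, and solving the resulting boundary value problem by a shooting/continuity argument, should yield solutions $u_\varepsilon\to u$ with $\|(r_\varepsilon)_+\|_{L^1(0,\pi_p)}\to\int|u'|^p-\lambda\int|u|^p<C(p,\lambda)$, contradicting the strict bound \eqref{LyapunovLP}. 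I expect the genuine obstacle to lie precisely in this step: the sup-norm constraint makes the Euler--Lagrange condition measure-valued, so the delicate work is to approximate the point mass by admissible continuous potentials, to guarantee that the perturbed problem \eqref{SLP} actually possesses a nontrivial solution, and to control the convergence $\|(r_\varepsilon)_+\|_{L^1}\to\Lambda(p,\lambda)$. Once this is secured, $\Lambda(p,\lambda)\ge C(p,\lambda)$ follows, and the two inequalities give $\Lambda(p,\lambda)=C(p,\lambda)$, proving both \eqref{Sobolev} and its sharpness.
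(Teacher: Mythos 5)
Your upper-bound/sharpness half is essentially correct: inserting the symmetric test function $u_*(x)=\sin_p(Kx)$ for $x\le\pi_p/2$, $u_*(x)=u_*(\pi_p-x)$ otherwise (and its affine and $\sinh_p$ analogues) into the quotient, and using the antiderivative identities you state, does return exactly $C(p,\lambda)$, so the constant cannot be lowered; this is the paper's extremal $u_{\pi_p/2}$ from Lemma \ref{lemuy}. One caveat: your claim that the Euler--Lagrange jump condition ``forces $x_0=\pi_p/2$'' is not justified as written --- the jump condition can be satisfied at any $x_0$ with a suitable multiplier, and what actually singles out $\pi_p/2$ in the paper is the monotonicity/convexity analysis of $F(y)$ in Lemma \ref{lemmin}, together with the subtlety of Remark \ref{rem3} that the one-sided profiles need not attain their maximum at $x_0$. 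Since the upper bound only needs a test function, this does not damage that half.

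The genuine gap is in the lower bound $\Lambda(p,\lambda)\ge C(p,\lambda)$, which is the actual content of \eqref{Sobolev}. First, the step you yourself flag as the obstacle is indeed missing, though it does not require a shooting argument: given $u$ with quotient below $C(p,\lambda)$ and $|u(c)|=\|u\|_{L^\infty}$, take the unit-mass triangular bump $r_\delta$ at $c$ and minimize the weighted Rayleigh quotient $J(\phi)/\int_0^{\pi_p} r_\delta|\phi|^p\,dx=:\alpha(\delta)$; its minimizer is a nontrivial solution of \eqref{SLP} with $r=\alpha(\delta)r_\delta$ and $\|r_+\|_{L^1}=\alpha(\delta)\le J(u)/\int_0^{\pi_p} r_\delta|u|^p\,dx\to J(u)/\|u\|_{L^\infty}^p<C(p,\lambda)$ as $\delta\to0$, contradicting \eqref{LyapunovLP}. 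This is precisely the device of the paper's Lemma \ref{lem3}. Second, and more seriously, within this paper the appeal to Theorem \ref{thm1} is circular: the explicit constant in \eqref{LyapunovLP} is itself obtained by proving $\tilde C(p,\lambda)=C(p,\lambda)$ (Propositions \ref{propC}, \ref{prop=}, \ref{propD}), and that identity is exactly the statement $\Lambda(p,\lambda)=C(p,\lambda)$ you are trying to prove. The paper's route to the lower bound is direct: relax the sup-norm constraint to the pointwise constraint $u(y)=1$ (the sets $W(y)$ in \eqref{MWy}), solve the relaxed Euler--Lagrange problem explicitly to obtain $F(y)$, show $\inf_{y}F(y)=F(\pi_p/2)=C(p,\lambda)$, and use $M(y)\subset W(y)$ to get $\tilde C(p,\lambda)\ge C(p,\lambda)$. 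You should either reproduce that relaxation argument or give an independent proof of the constant in Theorem \ref{thm1} before invoking it.
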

\begin{rem}
Poincar\'{e} inequality yields
\begin{align}
\label{upper}
 \int_{0}^{\pi_{p}}\vert u'\vert^{p}\,dx-\lambda\int_{0}^{\pi_{p}}\vert u\vert^{p}\,dx
\leq
\begin{cases}
 \int_{0}^{\pi_{p}}\left|u'\right|^{p}\,dx,& 0< \lambda<\lambda_1,\\
\left(1-\frac{\lambda}{\lambda_1}\right)\int_{0}^{\pi_{p}}\left|u'\right|^{p}\,dx,& \lambda\leq 0.
\end{cases}
\end{align}
Hence, the inequality \eqref{Sobolev} describes the embedding $W^{1,p}_{0}(0,\pi_{p})\subset L^{\infty}(0,\pi_{p})$.
\end{rem}
\section{Proof of Theorem \ref{thm1}}
To prove Theorem \ref{thm1}, we consider the minimization problem:
\begin{align}\label{min1}
\inf_{u\in W^{1,p}_{0}(0,\pi_{p}),\, u\neq 0}\frac{J(u)}
{\Vert u\Vert^{p}_{L^{\infty}(0,\pi_{p})}}=
\inf_{u\in W^{1,p}_{0}(0,\pi_{p}),\, \Vert u\Vert_{L^{\infty}(0,\pi_{p})}=1}J(u),
\end{align}
where 
\begin{align*}
 J(u)=\int_{0}^{\pi_{p}}\vert u'\vert^{p}\,dx-\lambda\int_{0}^{\pi_{p}}\vert u\vert^{p}\,dx.
\end{align*}
Including \eqref{upper}, the following inequality holds for any $u \in W^{1,p}_0(0,\pi_p)$:
\begin{equation}
\label{bounded}
\left(1-\frac{\lambda_+}{\lambda_1}\right)\int_0^{\pi_p}|u'|^p\,dx
\leq J(u) \leq
\left(1-\frac{\lambda_-}{\lambda_1}\right)\int_0^{\pi_p}|u'|^p\,dx,
\end{equation}
where $\lambda_+=\max(\lambda,0)$ and $\lambda_-=\min(\lambda,0)$
for $\lambda<\lambda_1$.
In particular, $J(u)>0$ for $u \in W^{1,p}_0(0,\pi_p) \setminus \{0\}$
and $J(u)=0$ if and only if $u=0$.
\begin{lem}\label{lem1}
Assume $1<p<\infty$ and $\lambda<\lambda_1$. Then, the infimum of \eqref{min1} is attained.
\end{lem}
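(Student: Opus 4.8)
The plan is to apply the direct method of the calculus of variations to the normalized formulation appearing on the right-hand side of \eqref{min1}. Write $m$ for the infimum and fix a minimizing sequence $\{u_n\}\subset W^{1,p}_0(0,\pi_p)$ with $\|u_n\|_{L^\infty(0,\pi_p)}=1$ and $J(u_n)\to m$. The first step is to establish boundedness in $W^{1,p}_0(0,\pi_p)$. Since $\lambda<\lambda_1$, the coefficient in the lower bound of \eqref{bounded} is strictly positive, so
\[
\left(1-\frac{\lambda_+}{\lambda_1}\right)\int_0^{\pi_p}|u_n'|^p\,dx\leq J(u_n),
\]
and as the convergent sequence $J(u_n)$ is bounded, so is $\int_0^{\pi_p}|u_n'|^p\,dx$; by the Poincar\'e inequality the full norm in $W^{1,p}_0(0,\pi_p)$ is then bounded as well.

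Because $1<p<\infty$, the space $W^{1,p}_0(0,\pi_p)$ is reflexive, and after passing to a subsequence (not relabeled) I may assume $u_n\rightharpoonup u$ weakly in $W^{1,p}_0(0,\pi_p)$. The decisive step is to invoke the one-dimensional compact embedding $W^{1,p}_0(0,\pi_p)\hookrightarrow C[0,\pi_p]$: a bounded sequence in $W^{1,p}$ is uniformly bounded and, through the H\"older estimate $|u_n(x)-u_n(y)|\leq \|u_n'\|_{L^p(0,\pi_p)}|x-y|^{1-1/p}$, equicontinuous, so Arzel\`a--Ascoli yields a uniformly convergent subsequence, necessarily with limit $u$. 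Uniform convergence preserves the sup-norm constraint, giving $\|u\|_{L^\infty(0,\pi_p)}=\lim_n\|u_n\|_{L^\infty(0,\pi_p)}=1$; in particular $u\neq 0$ and $u$ is admissible.

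It then remains to pass $J$ to the limit. The map $u\mapsto\int_0^{\pi_p}|u'|^p\,dx$ is convex and strongly continuous on $W^{1,p}_0(0,\pi_p)$, hence weakly lower semicontinuous, so $\int_0^{\pi_p}|u'|^p\,dx\leq\liminf_n\int_0^{\pi_p}|u_n'|^p\,dx$; meanwhile $\int_0^{\pi_p}|u_n|^p\,dx\to\int_0^{\pi_p}|u|^p\,dx$ by the uniform, hence $L^p$, convergence of $u_n$ to $u$. Combining these gives $J(u)\leq\liminf_n J(u_n)=m$, while admissibility of $u$ forces $J(u)\geq m$; therefore $J(u)=m$ and the infimum is attained.

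I expect the only genuine subtlety to lie in preserving the $L^\infty$ constraint under the limit: weak $W^{1,p}$ convergence by itself does not control the sup norm, and it is precisely the compactness of the embedding into $C[0,\pi_p]$, a feature special to one spatial dimension, that upgrades weak convergence to uniform convergence and thereby keeps the limit $u$ admissible.
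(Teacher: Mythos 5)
Your proof is correct and follows essentially the same route as the paper's: coercivity from \eqref{bounded} to bound minimizing sequences in $W^{1,p}_0(0,\pi_p)$, the compact one-dimensional embedding into $L^\infty$ (resp.\ $C[0,\pi_p]$) to preserve the constraint $\|u\|_{L^\infty(0,\pi_p)}=1$ in the limit, and weak lower semicontinuity of $J$. The only difference is presentational—the paper packages the argument as minimization of a weakly lower semicontinuous functional over a weakly compact set $W_R$, while you extract a convergent subsequence from a minimizing sequence directly.
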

\begin{proof}
We define 
\begin{align*}
W=&
\left\{
u\in W^{1,p}_{0}(0,\pi_{p})\,|\,\|u\|_{L^{\infty}(0,\pi_{p})}=1\right\},\\
W_{R}=&\left\{
u\in W^{1,p}_{0}(0,\pi_{p})\,|\,\|u\|_{L^{\infty}(0,\pi_{p})}=1,\,\,
\int_{0}^{\pi_{p}}\left|u'\right|^{p}\,dx\leq R
\right\}, \quad R>0.
\end{align*}
Let $u_{0}\in W$. We put $R$ so large that
\begin{align}\label{R}
\left(1-\frac{\lambda_+}{\lambda_1}\right)R>\left(1-\frac{\lambda_-}{\lambda_1}\right)\int_{0}^{\pi_{p}}\left|u'_{0}\right|^{p}\,dx.
\end{align}
Take $u\in W\setminus W_{R}$. Then from \eqref{bounded} and \eqref{R}, we have 
\begin{align*}
J(u)\geq \left(1-\frac{\lambda_+}{\lambda_1}\right)\int_{0}^{\pi_{p}}\left|u'\right|^{p}\,dx
> \left(1-\frac{\lambda_+}{\lambda_1}\right)R> 
\left(1-\frac{\lambda_-}{\lambda_1}\right)\int_{0}^{\pi_{p}}\left|u'_{0}\right|^{p}\,dx \geq J(u_{0}).
\end{align*}
Hence, we obtain
\begin{align}\label{BR}
\inf_{u \in W\setminus W_{R}}J(u) > J(u_{0})\geq \inf_{u\in W}J(u).
\end{align}
From \eqref{BR} we have
\begin{align*}
\inf_{u\in W}J(u)=\inf_{u\in W_{R}}J(u).
\end{align*}

We know the set
\begin{align*}
\left\{
u\in W^{1,p}_{0}(0,\pi_{p})\,|\,
\int_{0}^{\pi_{p}}\left|u'\right|^{p}\,dx\leq R
\right\}
\end{align*}
is a weakly compact set. While from the Sobolev embedding, $W^{1,p}(0,\pi_{p})$ is compactly embedded in $L^{\infty}(0,\pi_{p})$, hence the subset $W_{R}$ is a weakly closed set. Thus, $W_{R}$ is a weakly compact set. 
Since $J$ is a weakly lower semi-continuous functional on $W^{1,p}(0,\pi_{p})$ (because $\Vert u'\Vert^{p}_{L^{p}(0,\pi_{p})}$ is weakly lower semi-continuous and $\Vert u\Vert^{p}_{L^{p}(0,\pi_{p})}$ is weakly continuous), the infimum of $J$ is attained on $W_{R}$. 
\end{proof}
We will denote by $\tilde{C}(p,\lambda)$ the infimum of \eqref{min1}. 
Note that $\tilde{C}(p,\lambda)>0$ by \eqref{bounded}. 
\begin{lem}\label{lem2}
Suppose $1<p<\infty$, $\lambda<\lambda_1$, and \eqref{SLP} has a nontrivial solution. 
Then, it holds
\begin{align}\label{Lya}
\|r_{+}\|_{L^{1}(0,\pi_{p})}> \tilde{C}(p,\lambda).
\end{align}
\end{lem}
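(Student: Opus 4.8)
The plan is to test the equation in \eqref{SLP} against the solution itself, producing an energy identity that I can compare with the variational quantity $\tilde{C}(p,\lambda)$. Let $u$ be a nontrivial solution of \eqref{SLP} and set $m=\|u\|_{L^{\infty}(0,\pi_{p})}>0$. Since $u\in W^{1,p}_0(0,\pi_p)$ and, by the regularity noted in the introduction, $u,\ |u'|^{p-2}u'\in C^1[0,\pi_p]$, I would multiply the equation by $u$, integrate over $(0,\pi_p)$ and integrate by parts, the boundary terms vanishing because $u(0)=u(\pi_p)=0$. Using $|u|^{p-2}u\cdot u=|u|^p$, this gives the identity
\begin{align*}
J(u)=\int_0^{\pi_p}|u'|^p\,dx-\lambda\int_0^{\pi_p}|u|^p\,dx=\int_0^{\pi_p}r(x)|u|^p\,dx.
\end{align*}

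Next, using $r\le r_+$ and $|u|\le m$, I would estimate
\begin{align*}
J(u)=\int_0^{\pi_p}r|u|^p\,dx\le\int_0^{\pi_p}r_+|u|^p\,dx\le m^p\|r_+\|_{L^1(0,\pi_p)}.
\end{align*}
Since $u$ is an admissible competitor in \eqref{min1}, one has $\tilde{C}(p,\lambda)\le J(u)/m^p$, and dividing the display by $m^p$ already yields the non-strict bound $\tilde{C}(p,\lambda)\le\|r_+\|_{L^1(0,\pi_p)}$.

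The substance of the lemma is the \emph{strict} inequality, and I expect the main obstacle to be upgrading the second estimate above to a strict one. I would argue by contradiction: if $\int_0^{\pi_p}r_+(m^p-|u|^p)\,dx=0$, then, the integrand being nonnegative, $r_+(x)\,(m^p-|u(x)|^p)=0$ for almost every $x$. Because $J(u)>0$ by \eqref{bounded}, the identity forces $\{r>0\}$ to be a nonempty open set, and this is exactly where the continuity of $r$ enters. Taking a connected component $(c,d)$ of $\{r>0\}$, on it $r_+=r>0$ forces $|u|=m$ a.e., hence $u\equiv m$ or $u\equiv-m$ on $(c,d)$ by continuity; then $u'\equiv0$ there, and the equation collapses to $r\equiv-\lambda$ on $(c,d)$, which in particular requires $\lambda<0$. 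If $c=0$ this contradicts $u(0)=0$, while if $c>0$ then $c\notin\{r>0\}$ gives $r(c)\le0$, whereas continuity forces $r(c)=-\lambda>0$; either way a contradiction results. Thus $\int_0^{\pi_p}r_+|u|^p\,dx<m^p\|r_+\|_{L^1(0,\pi_p)}$, and combining with $\tilde{C}(p,\lambda)\le J(u)/m^p$ gives \eqref{Lya}. The only genuinely delicate points are the justification of the integration by parts, which is covered by the stated $C^1$ regularity, and the repeated appeal to continuity of $r$, which is precisely what forbids the extremal configuration in which the mass of $r$ concentrates on the set $\{|u|=m\}$.
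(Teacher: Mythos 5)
Your proposal is correct and follows essentially the same route as the paper: the energy identity $J(u)=\int_0^{\pi_p} r|u|^p\,dx$, the bound by $\|u\|_{L^\infty}^p\|r_+\|_{L^1}$, and a contradiction argument showing that equality would force $r\equiv-\lambda>0$ on the open set where $r>0$, which is incompatible with the continuity of $r$ (the paper phrases this as $r_+$ having the disconnected image $\{0,-\lambda\}$, you phrase it via the boundary point of a connected component — the same use of continuity). No gaps.
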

\begin{proof}
Let $u$ be a nontrivial solution of \eqref{SLP}. Multiplying both sides of \eqref{SLP} by $u$ and integrating it over $(0,\pi_{p})$, we obtain
\begin{align*}
J(u)=\int_{0}^{\pi_p}r(x)\left|u(x)\right|^{p}\,dx;
\end{align*}
hence from \eqref{bounded},
\begin{align}
\label{juupper}
0<J(u) \leq \int_{0}^{\pi_p}r_+(x)\left|u(x)\right|^{p}\,dx
<\Vert u\Vert^{p}_{L^{\infty}(0,\pi_{p})}\int_{0}^{\pi_p}r_{+}(x)\,dx.
\end{align}


Here, the last strict inequality is proved as follows.
Assume the equality holds.
Then, $r_+(x)(|u(x)|^p-\|u\|_{L^{\infty}(0,\pi_{p})}^p)=0$ in $(0,\pi_p)$; hence 
$r_+(x)=0$ or $u(x)=M$ for each $x \in (0,\pi_p)$, 
where $M=\pm \|u\|_{L^{\infty}(0,\pi_{p})} \neq 0$. 
Set $P=\{x \in (0,\pi_p)\ |\ r_+(x)>0\}$.
By \eqref{juupper}, we may suppose $P \neq \emptyset$.
Moreover, $P \neq (0,\pi_p)$, since if not so,  
then $u\equiv M \neq 0$ on $(0,\pi_p)$ and $u(0)=u(\pi_p)=0$, which is impossible.
Now, by the equation \eqref{SLP}, $r_+(x)=r(x)=-\lambda$ (and $\lambda<0$) on $P$,
since $u \equiv M$ on the open set $P$.
On the other hand, $r_+(x)=0$ on $(0,\pi_p)\setminus P$. 
Hence, $r_+$ is surjective from $(0,\pi_p)$ to $\{0,-\lambda\}$, where $\lambda \neq 0$.
This contradicts the continuity of $r_+$. 


It follows from \eqref{juupper} and the definition of $\tilde{C}(p,\lambda)$ that
\begin{align}\label{Lineq}
J(u)
<  \tilde{C}(p,\lambda)^{-1}J(u)\int_{0}^{\pi_p}r_{+}(x)\,dx.
\end{align}
Since $u \neq 0$, dividing both sides of \eqref{Lineq} by $J(u)>0$,
we obtain the assertion.
\end{proof}

\begin{rem}
Lemma \ref{lem2} assures that 
there exists no nontrivial solution of \eqref{SLP} if 
\begin{align*}
\int_{0}^{\pi_{p}}r_{+}(x)dx \leq \tilde{C}(p,\lambda).
\end{align*}
\end{rem}

\begin{lem} \label{lem3}
The estimate \eqref{Lya} is sharp, in the sense that there exists 
$r\in C[0,\pi_{p}]$ for which \eqref{SLP} has a nontrivial solution such that the left-hand side of \eqref{Lya} can be as close as possible to the right-hand side of \eqref{Lya}.  
\end{lem}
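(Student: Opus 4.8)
The plan is to produce the extremal sequence directly from a minimizer of \eqref{min1}. By Lemma \ref{lem1} there is $u^\ast\in W^{1,p}_0(0,\pi_p)$ with $\|u^\ast\|_{L^\infty(0,\pi_p)}=1$ and $J(u^\ast)=\tilde C(p,\lambda)$; replacing $u^\ast$ by $|u^\ast|$ (which preserves both $J$ and the $L^\infty$-norm) I may assume $u^\ast\ge 0$. First I would record the structure of $u^\ast$. Testing its minimality against variations $u^\ast+t\phi$ with $\phi\in C_c^\infty((0,\pi_p)\setminus M)$, where $M=\{x:u^\ast(x)=1\}$ (for such $\phi$ one has $\|u^\ast+t\phi\|_{L^\infty}=1$ when $|t|$ is small, since the maximum is attained on $M$ and is left unchanged there), gives $\frac{d}{dt}J(u^\ast+t\phi)|_{t=0}=0$, i.e. $u^\ast$ solves the homogeneous equation $(|u'|^{p-2}u')'+\lambda|u|^{p-2}u=0$ on $(0,\pi_p)\setminus M$. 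Together with $u^\ast(0)=u^\ast(\pi_p)=0$ and the explicit solutions of this ODE (a multiple of $\sin_p$, a linear function, or a multiple of $\sinh_p$, according to the sign of $\lambda$), this forces $u^\ast$ to be the symmetric ``corner'' profile with a single interior maximum at a point $x_0$. In particular the momentum $w^\ast:=|{u^\ast}'|^{p-2}{u^\ast}'$ is $C^1$ on $(0,\pi_p)\setminus\{x_0\}$, with $w^{\ast\prime}=-\lambda|u^\ast|^{p-2}u^\ast$ there, while at $x_0$ it has a downward jump $w^\ast(x_0^-)-w^\ast(x_0^+)>0$.

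Next I would regularize the corner, working with the momentum rather than with ${u^\ast}'$: for $1<p<2$ the map $s\mapsto|s|^{p-2}s$ is singular at $s=0$, so a naive smooth cap with vanishing derivative at its peak would make $(|u'|^{p-2}u')'$, hence the candidate potential, blow up, whereas a $C^1$ momentum profile keeps everything continuous. Thus, for small $\delta>0$, on $I_\delta=(x_0-\delta,x_0+\delta)$ I would replace $w^\ast$ by a $C^1$, strictly decreasing function $w_\delta$ that passes through $0$ and matches $w^\ast$ and $w^{\ast\prime}$ at the endpoints $x_0\pm\delta$, leaving $w_\delta=w^\ast$ outside $I_\delta$. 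Recovering $u_\delta$ by integrating $u_\delta'=|w_\delta|^{(2-p)/(p-1)}w_\delta$ from the matched boundary value (and rescaling by a constant, which does not change the equation, so that $\|u_\delta\|_{L^\infty}=1$), and setting
\[
 r_\delta:=-\frac{w_\delta'}{|u_\delta|^{p-2}u_\delta}-\lambda \ \text{ on } I_\delta, \qquad r_\delta:=0 \ \text{ on } (0,\pi_p)\setminus I_\delta,
\]
one checks that $u_\delta$ is a nontrivial solution of \eqref{SLP} with potential $r_\delta$. Continuity of $r_\delta$, including at $\partial I_\delta$ where the matching of $w_\delta$ and $w_\delta'$ makes the one-sided limits equal to $0$, gives $r_\delta\in C[0,\pi_p]$.

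Finally I would pass to the limit $\delta\to0$. Since $w_\delta$ stays uniformly bounded and $u_\delta=u^\ast$ off the shrinking interval $I_\delta$, one obtains $u_\delta\to u^\ast$ in $W^{1,p}_0(0,\pi_p)$, hence $J(u_\delta)\to J(u^\ast)=\tilde C(p,\lambda)$. Multiplying \eqref{SLP} by $u_\delta$ and integrating gives the energy identity $J(u_\delta)=\int_0^{\pi_p}r_\delta|u_\delta|^p\,dx$, exactly as in Lemma \ref{lem2}. The monotonicity $w_\delta'\le0$ yields $r_\delta\ge-\lambda$, so the negative part $(r_\delta)_-\le\lambda_+$ is supported in $I_\delta$ and satisfies $\|(r_\delta)_-\|_{L^1}\le 2\delta\,\lambda_+\to0$; moreover $u_\delta\to1$ uniformly on $I_\delta\supset\operatorname{supp}r_\delta$. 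Feeding these facts into the energy identity sandwiches $\|r_{\delta,+}\|_{L^1(0,\pi_p)}$ between quantities tending to $\tilde C(p,\lambda)$, so $\|r_{\delta,+}\|_{L^1}\to\tilde C(p,\lambda)$; combined with the strict lower bound $\|r_{\delta,+}\|_{L^1}>\tilde C(p,\lambda)$ furnished by Lemma \ref{lem2}, this establishes the claimed sharpness. I expect the regularization step to be the main obstacle: the $C^1$ momentum profile $w_\delta$ must be built so that simultaneously $r_\delta$ is continuous up to $\partial I_\delta$, $r_\delta$ remains bounded below by $-\lambda$ (to control its negative part), and $u_\delta\to u^\ast$ in $W^{1,p}$ — the degeneracy of the $p$-Laplacian at the peak for $1<p<2$ being precisely why the construction has to be carried out in the momentum variable rather than for $u_\delta$ directly.
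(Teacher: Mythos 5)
Your route is genuinely different from the paper's. The paper never analyzes the shape of the minimizer $u_*$ at all: it takes a tent-shaped weight $r_\delta$ of unit mass concentrated at a maximum point $c$ of $u_*$, defines $\alpha(\delta)$ as the first eigenvalue of the weighted problem $\inf J(\phi)/\int r_\delta|\phi|^p$, whose minimizer automatically furnishes a nontrivial solution of \eqref{SLP} with $r=\alpha(\delta)r_\delta\ge 0$, and then simply plugs $u_*$ into the Rayleigh quotient to get $\alpha(\delta)<\tilde C(p,\lambda)+\epsilon$. That argument needs no regularity or structure theory for $u_*$ and produces a nonnegative potential, so $r=r_+$ and the $L^1$ norm is read off directly from the normalization $\int r_\delta=1$. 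Your proposal instead builds the near-extremal potential by regularizing the minimizer itself, which is more explicit but shifts all the work onto properties of $u_*$ that have to be proved.

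Two of those properties are asserted rather than established, and they are genuine gaps. First, the claim that the Euler--Lagrange equation on $(0,\pi_p)\setminus M$ together with the boundary conditions ``forces $u^*$ to be the symmetric corner profile with a single interior maximum'' does not follow from what you wrote: the contact set $M=\{u^*=1\}$ could a priori be an interval (a plateau), and even if $M=\{x_0\}$, neither the location $x_0=\pi_p/2$ nor the symmetry is implied by the ODE plus $u^*(0)=u^*(\pi_p)=0$ --- in the paper these facts only emerge later from the minimization of $F(y)$ over $y$. Your construction does not strictly need symmetry, but it does need $M$ to be a point (or a separate treatment of the plateau, where the equation forces $r\equiv-\lambda$), and that requires an argument. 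Second, the recovery of $u_\delta$ from the regularized momentum is underdetermined at the right endpoint: integrating $u_\delta'=|w_\delta|^{(2-p)/(p-1)}w_\delta$ from $x_0-\delta$ will in general not return to the value $u^*(x_0+\delta)$ at $x_0+\delta$, so $u_\delta$ would differ from $u^*$ by a nonzero constant on $[x_0+\delta,\pi_p]$ and violate $u_\delta(\pi_p)=0$. The rescaling you invoke normalizes the sup norm but cannot repair this. You must impose the additional matching condition $\int_{I_\delta}|w_\delta|^{(2-p)/(p-1)}w_\delta\,dx=u^*(x_0+\delta)-u^*(x_0-\delta)$ on the interpolant (for the symmetric profile an antisymmetric choice of $w_\delta$ about $x_0$ does this automatically, but then you are again using the symmetry you have not proved). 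With these two points repaired, your limiting argument via the energy identity and the bound $\|(r_\delta)_-\|_{L^1}\le 2\delta\lambda_+$ does go through; as written, however, the proof is incomplete, and the paper's eigenvalue trick is the cheaper way to avoid both issues.
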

\begin{proof}
Let $u_{*}\in W^{1,p}(0,\pi_{p})$ be 
the minimizer of \eqref{min1}, whose existence is assured by Lemma \ref{lem1}.
Moreover, let $c\in (0,\pi_{p})$ be a point with 
$|u_*(c)|=\max_{x \in [0,\pi_p]}{|u_*(x)|}$. 
We define the function $r_{\delta}$ as
\begin{align*}
r_{\delta}(x)=
\begin{cases}
0, & 0\leq x<c-\delta,\,\,c+\delta<x\leq \pi_{p},\\
\frac{1}{\delta^2}(x-c+\delta), &c-\delta\leq x< c,\\
-\frac{1}{\delta^2}(x-c-\delta), &c\leq x\leq c+\delta,
\end{cases}
\end{align*} 
where $\delta>0$ satisfies $0<c-\delta$ and $c+\delta<\pi_{p}$.
Note that it holds
\begin{align}\label{intrd}
\int_{0}^{\pi_{p}}r_{\delta}(x)\,dx=\int_{c-\delta}^{c+\delta}r_{\delta}(x)\,dx=1.
\end{align}

As in the proof of Lemma \ref{lem1}, we can show the existence of 
minimizer in $W^{1,p}_0(0,\pi_p)$ of
\begin{align*}
\inf_{\phi\in W^{1,p}_{0}(0,\pi_{p}),\phi\neq 0}\frac{\int_{0}^{\pi_{p}}\left|\phi'(x)\right|^{p}\,dx-\lambda\int_{0}^{\pi_{p}}\left|\phi(x)\right|^{p}\,dx}
{\int_{0}^{\pi_{p}}r_{\delta}(x)\left|\phi(x)\right|^{p}\,dx} =:\alpha(\delta).
\end{align*}
Thus, we emphasize that 
\eqref{SLP} has a nontrivial solution (as assertion) for
\begin{align*}
r(x)=\alpha(\delta)r_{\delta}(x) \in C[0,\pi_p].
\end{align*}

From \eqref{intrd}, for arbitrarily small $\epsilon>0$ there exists $\delta=\delta(\epsilon)>0$ such that the following inequality holds:  
\begin{align*}
\frac{\int_{0}^{\pi_{p}}\left|u_{*}'(x)\right|^{p}\,dx-\lambda\int_{0}^{\pi_{p}}\left|u_{*}(x)\right|^{p}\,dx}{\int_{0}^{\pi_{p}}r_{\delta}(x)\left|u_{*}(x)\right|^{p}\,dx}-\epsilon
<\frac{\int_{0}^{\pi_{p}}\left|u_{*}'(x)\right|^{p}\,dx-\lambda\int_{0}^{\pi_{p}}\left|u_{*}(x)\right|^{p}\,dx}{\|u_{*}\|_{L^{\infty}(0,\pi_{p})}^{p}}.
\end{align*}
Hence from the definition of $\alpha(\delta)$ and $u_{*}$, we obtain
\begin{align*}
\alpha(\delta(\epsilon))-\epsilon
<\tilde{C}(p,\lambda).
\end{align*}
From above inequality, we have
\begin{align*}
\int_{0}^{\pi_{p}}r(x)\,dx=\alpha(\delta(\epsilon))\int_{0}^{\pi_{p}}r_{\delta(\epsilon)}(x)\,dx=
\alpha(\delta(\epsilon))< \tilde{C}(p,\lambda)+\epsilon.
\end{align*}
Hence, by $r=r_+$ and \eqref{Lya}, we obtain
\begin{align*}
\tilde{C}(p,\lambda)<\int_{0}^{\pi_{p}}r(x)\,dx < \tilde{C}(p,\lambda)+\epsilon.
\end{align*}
Since $\epsilon>0$ is arbitrarily small, this is the desired conclusion. 
\end{proof}
From Lemmas \ref{lem1}--\ref{lem3} above, 
to complete the proof of Theorem \ref{thm1},
all we have to do is to show 
\begin{equation}
\label{eq:C=C}
\tilde{C}(p,\lambda)=C(p,\lambda).
\end{equation}
Corollary \ref{cor1} immediately follows from the definition of $\tilde{C}(p,\lambda)$.

We define the following sets for $y \in (0,\pi_p)$:
\begin{align}\label{MWy}
W(y)=&\left\{
u\in W^{1,p}_{0}(0,\pi_{p})\,|\, u(y)=1
\right\},\\
M(y)=&\left\{
u\in W^{1,p}_{0}(0,\pi_{p})\,|\, \max_{x\in [0,\pi_{p}]}\left|u(x)\right|=u(y)=1
\right\}.\nonumber
\end{align}
Then, $M(y) \subset W(y)$ and
\begin{align}\label{infM}
\tilde{C}(p,\lambda)=
\inf_{y\in (0,\pi_{p})}\inf_{u\in M(y)}J(u)=\inf_{y\in (0,\pi_{p}/2]}\inf_{u\in M(y)}J(u),
\end{align}
since if $u(\cdot)\in W^{1,p}_{0}(0,\pi_{p})$ attains the infimum of \eqref{infM}, 
then so does $u(\pi_{p}/2-\cdot)\in W^{1,p}_{0}(0,\pi_{p})$. 

Instead of handling \eqref{infM} directly, we consider the relaxed problem:
\begin{align}
\label{relaxed}
\inf_{y\in(0,\pi_{p}/2]}\inf_{u\in W(y)}J(u)=\inf_{y\in(0,\pi_{p}/2]}F(y),
\end{align}
where $F(y)=\inf_{u\in W(y)}J(u)$.

To show \eqref{eq:C=C}, we will divide the proof into three cases
$0<\lambda<\lambda_1$, $\lambda=0$, and $\lambda<0$.

\subsection{The case $0<\lambda<\lambda_1$}
In this case, we show
\begin{proposition}
\label{propC}
\begin{align}\label{bestconst}
\tilde{C}(p,\lambda)=C(p,\lambda)=
2K^{p-1}\left(\cot_{p}\left(\frac{K\pi_{p}}{2}\right)\right)^{p-1},
\end{align}
where $K=(\lambda/\lambda_1)^{1/p} \in (0,1)$.
\end{proposition}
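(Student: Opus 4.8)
The plan is to establish the formula for $\tilde C(p,\lambda)=C(p,\lambda)$ by solving the relaxed problem \eqref{relaxed} explicitly and then arguing that its minimizer in fact lies in $M(y)$, so that nothing is lost in the relaxation. First I would fix $y\in(0,\pi_p/2]$ and compute $F(y)=\inf_{u\in W(y)}J(u)$. The constraint $u(y)=1$ is linear and the functional $J$ is convex and coercive (by \eqref{bounded}), so a unique minimizer exists and is characterized by the Euler--Lagrange equation. Away from $x=y$ the minimizer $u$ satisfies the homogeneous equation $(|u'|^{p-2}u')'+\lambda|u|^{p-2}u=0$ with the boundary conditions $u(0)=u(\pi_p)=0$; the point constraint at $y$ produces a jump in the flux $|u'|^{p-2}u'$ across $y$ (a Lagrange-multiplier/Dirac term), while $u$ itself stays continuous with $u(y)=1$. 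Because $0<\lambda<\lambda_1=p-1$, the natural building block is $\sin_p$ rescaled: writing $K=(\lambda/\lambda_1)^{1/p}$, the function $x\mapsto \sin_p(Kx)$ solves the equation on $(0,\pi_p)$ by the defining property \eqref{ssh} together with the scaling that turns the coefficient $p-1$ into $\lambda=K^p(p-1)$. Thus on $[0,y]$ I would take $u(x)=\sin_p(Kx)/\sin_p(Ky)$ and on $[y,\pi_p]$ the reflected piece $u(x)=\sin_p(K(\pi_p-x))/\sin_p(K(\pi_p-y))$, each normalized so that $u(y)=1$.

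Next I would evaluate $J(u)$ on this explicit minimizer. Using the equation and integration by parts, $J(u)$ collapses to a boundary/flux term: $J(u)=\bigl[\,|u'|^{p-2}u'\,u\,\bigr]$ evaluated as the jump at $y$, namely $J(u)=u(y)\bigl(|u'(y^-)|^{p-2}u'(y^-)-|u'(y^+)|^{p-2}u'(y^+)\bigr)$ with $u(y)=1$. Computing the one-sided derivatives of the two $\sin_p$ pieces at $y$ and simplifying via $\cot_p$ gives
\begin{align*}
F(y)=K^{p-1}\Bigl(\bigl(\cot_p(Ky)\bigr)^{p-1}+\bigl(\cot_p(K(\pi_p-y))\bigr)^{p-1}\Bigr).
\end{align*}
Here I would need the elementary facts $\cos_p(Ky)=(\sin_p(Ky))'/K$-type relations and $|\cos_p|^p+|\sin_p|^p=1$ from \eqref{plusone} to carry out the derivative computation; the main care is keeping track of signs of $u'$ on the two sides so the flux jump is positive.

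Then I would minimize $F(y)$ over $y\in(0,\pi_p/2]$. Since $t\mapsto (\cot_p t)^{p-1}$ is strictly decreasing on $(0,\pi_p/2)$ and the two arguments $Ky$ and $K(\pi_p-y)$ move in opposite directions as $y$ varies, by symmetry and convexity considerations the minimum occurs where the two contributions balance, i.e.\ at $y=\pi_p/2$. There $Ky=K(\pi_p-y)=K\pi_p/2$ and the two terms coincide, giving
\begin{align*}
\min_{y}F(y)=F(\pi_p/2)=2K^{p-1}\Bigl(\cot_p\Bigl(\frac{K\pi_p}{2}\Bigr)\Bigr)^{p-1},
\end{align*}
which is exactly the claimed $C(p,\lambda)$. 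I expect the rigorous monotonicity of $F$ (ruling out interior minima and confirming $y=\pi_p/2$) to be the most delicate analytic step; a clean route is to show that $g(t)=(\cot_p t)^{p-1}$ is convex on $(0,\pi_p/2)$, whence $g(Ky)+g(K(\pi_p-y))$ is minimized at the symmetric point by Jensen or a direct derivative test, the endpoint $y=\pi_p/2$ being admissible.

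Finally, to justify that the relaxation is tight I would check that the minimizer just constructed for $y=\pi_p/2$ actually belongs to $M(\pi_p/2)$, not merely $W(\pi_p/2)$: since $\sin_p(Kx)$ is increasing on $[0,\pi_p/2]$ for $K\in(0,1)$ and the reflected branch is its mirror image, the assembled $u$ attains its maximum value $1$ precisely at $y=\pi_p/2$ and satisfies $|u|\le 1$ throughout, so $\max_{[0,\pi_p]}|u|=u(\pi_p/2)=1$. Combined with the general chain $\tilde C(p,\lambda)=\inf_y\inf_{M(y)}J\ge \inf_y\inf_{W(y)}J=\min_y F(y)$ from \eqref{infM}--\eqref{relaxed}, and the reverse inequality from exhibiting this admissible competitor in $M(\pi_p/2)$, this yields $\tilde C(p,\lambda)=\min_y F(y)=C(p,\lambda)$ and proves the proposition.
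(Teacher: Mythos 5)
Your overall architecture is the same as the paper's: relax $M(y)$ to $W(y)$, identify the minimizer $u_y$ of $J$ on $W(y)$ as the two normalized $\sin_p(K\cdot)$ pieces via the Euler--Lagrange equation, compute $F(y)$ explicitly, minimize over $y\in(0,\pi_p/2]$, and close the loop by checking that $u_{\pi_p/2}\in M(\pi_p/2)$ so the relaxation is tight. Your flux-jump evaluation of $J(u_y)$ is a legitimate and arguably slicker alternative to the paper's route through the integral formulas of Lemma \ref{lemI1I2}. Carried out with the signs tracked, however, it yields
\begin{align*}
F(y)=K^{p-1}\left(\left(\cot_{p}Ky\right)^{p-1}+\left|\cot_{p}K(\pi_{p}-y)\right|^{p-2}\cot_{p}K(\pi_{p}-y)\right),
\end{align*}
with a \emph{signed} power in the second term. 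This matters: when $K>1/2$ and $y$ is small, $K(\pi_p-y)>\pi_p/2$, so $\cot_p K(\pi_p-y)<0$ and the expression $\left(\cot_p K(\pi_p-y)\right)^{p-1}$ you wrote is not even defined for non-integer $p$.

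The genuine gap is in the minimization over $y$, which you yourself flag as the delicate step but then dispatch by ``convexity of $g(t)=(\cot_p t)^{p-1}$ on $(0,\pi_p/2)$ plus Jensen.'' That argument does not cover the whole parameter range. Writing $f(x)=K^{p-1}|\cot_p Kx|^{p-2}\cot_p Kx$, one has $f'(x)=-(p-1)K^p/(\sin_p Kx)^p$, which is increasing on $(0,\pi_p/(2K))$ but decreasing on $(\pi_p/(2K),\pi_p/K)$: $f$ is convex only up to $\pi_p/(2K)$ and concave beyond. Since the second argument $\pi_p-y$ sweeps up to $\pi_p$, it enters the concave region whenever $K>1/2$, i.e. $\lambda\in(\lambda_1/2^p,\lambda_1)$, and Jensen applied to $f(y)+f(\pi_p-y)$ is then unavailable. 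The paper's Lemma \ref{lemmin} closes exactly this case by a reflection: the antisymmetry $f(x)=-f(\pi_p/K-x)$ gives $f'(x)=f'(\pi_p/K-x)$, so $f'(\pi_p-y)=f'(z)$ with $z=(1/K-1)\pi_p+y\in(y,\pi_p/(2K))$, and monotonicity of $f'$ on the convex region then yields $F'(y)=f'(y)-f'(\pi_p-y)<0$ throughout $(0,\pi_p/2)$. Some argument of this kind is needed; without it your proof establishes the proposition only for $\lambda\le\lambda_1/2^p$. The remaining steps (existence of the minimizer of the relaxed problem, the identification of $u_y$, and the verification that $u_{\pi_p/2}\in M(\pi_p/2)$) are sound and agree with the paper.
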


\begin{lem}\label{lemuy}
For $y\in (0,\pi_{p}/2]$, 
$\inf_{u\in W(y)}J(u)$ is attained by the following $u_y\in W(y)$. 
\begin{align}\label{uy}
u_{y}(x)=
\begin{cases}
\frac{\sin_{p}Kx}{\sin_{p}Ky},& 0\leq x< y,\\
\frac{\sin_{p}K(\pi_{p}-x)}{\sin_{p}K(\pi_{p}-y)},&y\leq x\leq \pi_{p}.
\end{cases}
\end{align}
\end{lem}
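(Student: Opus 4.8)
I would prove the lemma by a Picone-type comparison, using the explicit $u_y$ as the comparison function; this simultaneously establishes existence of a minimizer and identifies it, bypassing a separate direct-method argument. First I would record that $u_y$ is an admissible competitor solving the right equation on each side of $y$. Writing $K=(\lambda/\lambda_1)^{1/p}\in(0,1)$, the function $w(x)=\sin_p(Kx)$ satisfies
\begin{equation*}
\left(|w'|^{p-2}w'\right)'+\lambda|w|^{p-2}w=0,
\end{equation*}
because $\sin_p$ solves \eqref{ssh} and $K^p\lambda_1=\lambda$; by the reflection $x\mapsto\pi_p-x$ the same holds for $\sin_p(K(\pi_p-x))$. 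Since $K<1$ and $y\in(0,\pi_p/2]$, the arguments $Kx$ and $K(\pi_p-x)$ stay in $(0,\pi_p)$, so both pieces of $u_y$ are strictly positive on their open intervals; together with $u_y(0)=u_y(\pi_p)=0$, $u_y(y)=1$ and smoothness on each subinterval, this gives $u_y\in W(y)$ with $v:=u_y>0$ on $(0,\pi_p)$.

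\textbf{Reduction and the Picone identity.} For $u\in W(y)$ I would first replace $u$ by $|u|$: this leaves $J$ and the constraint $|u|(y)=u(y)=1$ unchanged, so it suffices to treat $u\ge0$. For $u\ge0$ and $v>0$ differentiable, recall the Allegretto--Huang form of Picone's identity,
\begin{equation*}
L(u,v):=|u'|^p+(p-1)\frac{u^p}{v^p}|v'|^p-p\,\frac{u^{p-1}}{v^{p-1}}|v'|^{p-2}v'\,u'
=|u'|^p-\left(\frac{u^p}{v^{p-1}}\right)'|v'|^{p-2}v'=:R(u,v),
\end{equation*}
where $L(u,v)\ge0$ pointwise (a convexity/Young inequality), with equality iff $u/v$ is locally constant. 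Taking $v=u_y$ and integrating $R(u,v)$ separately over $(0,y)$ and $(y,\pi_p)$, I would integrate by parts and use $(|v'|^{p-2}v')'=-\lambda v^{p-1}$ on each piece to obtain
\begin{equation*}
\int_0^{\pi_p}R(u,v)\,dx=\int_0^{\pi_p}|u'|^p\,dx-\lambda\int_0^{\pi_p}u^p\,dx-\Delta
=J(u)-\Delta,
\end{equation*}
where $\Delta=|v'(y^-)|^{p-2}v'(y^-)-|v'(y^+)|^{p-2}v'(y^+)$ is the jump of the flux of $v$ at the corner $y$. Choosing $u=v$ gives $R(v,v)\equiv0$ and hence $\Delta=J(v)=J(u_y)$, so that $\int_0^{\pi_p}R(u,v)\,dx=J(u)-J(u_y)$. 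Since $R=L\ge0$, this yields $J(u)\ge J(u_y)$ for every $u\in W(y)$, i.e. $u_y$ attains $\inf_{u\in W(y)}J(u)$ (and equality forces $u/v$ constant, so $u=u_y$ by the normalization at $y$).

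\textbf{Main obstacle.} The delicate points are both at the endpoints and at the interface. At $0$ and $\pi_p$ the quotient $u^p/v^{p-1}$ must be controlled where $v$ vanishes linearly ($v\sim Kx$ near $0$); here I would use that $u\in W^{1,p}_0$ gives $u(x)^p\le x^{p-1}\|u'\|_{L^p(0,x)}^p=o(x^{p-1})$, so the boundary contributions of the integration by parts vanish. At $y$ the function $u_y$ is only continuous, not $C^1$ — the flux $|u_y'|^{p-2}u_y'$ jumps — and it is precisely this jump $\Delta$ that encodes the pointwise constraint $u(y)=1$ and equals $J(u_y)$. Justifying the splitting of the integral across $y$ and the vanishing endpoint terms rigorously is the crux; once that is in place the inequality $J(u)\ge J(u_y)$ is immediate from the sign of Picone's expression.
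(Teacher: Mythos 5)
Your proof is correct, but it takes a genuinely different route from the paper's. The paper first establishes the \emph{existence} of a minimizer of $J$ on $W(y)$ by the direct method (repeating the weak-compactness and weak lower semicontinuity argument of Lemma \ref{lem1}), then computes the first variation against test functions vanishing at $0$, $y$, $\pi_p$ to show the minimizer solves the two boundary value problems \eqref{left} and \eqref{right}, and finally identifies it with $u_y$ by uniqueness of solutions of those problems (via \eqref{ssh} and the homogeneity of the equation). You instead take the explicit candidate $u_y$ as given and prove directly that $J(u)\geq J(u_y)$ for every $u\in W(y)$ by the Allegretto--Huang/Picone identity, after reducing to $u\geq 0$; the jump of the flux $|u_y'|^{p-2}u_y'$ at $y$ is exactly what absorbs the pointwise constraint $u(y)=1$, and your computation $\Delta=J(u_y)$ is consistent with the paper's later formula \eqref{functionF}. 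Your approach buys several things: it makes the separate existence argument unnecessary, it yields uniqueness of the minimizer as a byproduct of the equality case in Picone's inequality, and it sidesteps the paper's (briefly asserted) uniqueness claim for the boundary value problems \eqref{left}--\eqref{right}. The price is the extra care you correctly identify: justifying the integration by parts up to the endpoints where $v=u_y$ vanishes (your H\"older estimate $u(x)^p=o(x^{p-1})$ handles this) and across the corner at $y$. The paper's route is more routine calculus of variations but requires the existence step and the Euler--Lagrange analysis; both are complete and correct.
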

\begin{proof}
The existence of the minimizer $u_y\in  W(y)$ of $J$ is assured in a similar way to Lemma \ref{lem1}. 

Let $\varphi\in W^{1,p}_{0}(0,\pi_{p})$ be an arbitrary element satisfying $\varphi(0)=\varphi(\pi_{p})=\varphi(y)=0$. Then the first variation of $J(u)$ at $u\in W(y)$ is
\begin{align*}
J'(u)[\varphi]=&p\int_{0}^{\pi_{p}}\left(\left|u'(x)\right|^{p-2}u'(x)\varphi'(x)-\lambda\left|u(x)\right|^{p-2}u(x)\varphi(x)\right)\,dx\\
=&p\left[\left|u'\right|^{p-2}u'\varphi\right]_{0}^{y}-p\int_{0}^{y}\left(\left(
\left|u'\right|^{p-2}u'
\right)'+\lambda\left|u\right|^{p-2}u\right)\varphi\, dx\\
&\qquad +p\left[\left|u'\right|^{p-2}u'\varphi\right]_{y}^{\pi_{p}}-p\int_{y}^{\pi_{p}}\left(\left(
\left|u'\right|^{p-2}u'
\right)'+\lambda\left|u\right|^{p-2}u\right)\varphi\, dx\\
=&-p\int_{0}^{y}\left(\left(
\left|u'\right|^{p-2}u'
\right)'+\lambda\left|u\right|^{p-2}u\right)\varphi\, dx
-p\int_{y}^{\pi_{p}}\left(\left(
\left|u'\right|^{p-2}u'
\right)'+\lambda\left|u\right|^{p-2}u\right)\varphi\, dx.
\end{align*}
Since $J(u_{y})=\inf_{u\in W(y)}J(u)$, $u_{y}$ satisfies $J'(u_y)[\varphi]=0$, and hence
\begin{align}\label{left}
\begin{cases}
\left(
\left|u_y'(x)\right|^{p-2}u_y'(x)
\right)'+\lambda\left|u_y(x)\right|^{p-2}u_y(x)=0,\quad x\in (0,y),\\
u_y(0)=0,\,u_y(y)=1,\\
\end{cases}
\end{align}
and
\begin{align}\label{right}
\begin{cases}
\left(
\left|u_y'(x)\right|^{p-2}u_y'(x)
\right)'+\lambda\left|u_y(x)\right|^{p-2}u_y(x)=0,\quad x\in (y,\pi_{p}),\\
u_y(y)=1,\,u_y(\pi_{p})=0.\\
\end{cases}
\end{align}
If we drop the condition $u_y(y)=1$ from \eqref{left} and \eqref{right}, we have 
from \eqref{ssh}, 
$u_y(x)=C_{1}\sin_{p}Kx$ for \eqref{left} and $u_y(x)=C_{2}\sin_{p}K(\pi_{p}-x)$ for \eqref{right}, where $C_{1}$ and $C_{2}$ are constants. 
From the condition $u_y(y)=1$, the constants $C_{1}$ and $C_{2}$ are uniquely determined as $C_{1}=(\sin_{p}Ky)^{-1}$ and $(\sin_{p}K(\pi_{p}-y))^{-1}$ respectively.  
\end{proof}
To establish the expression of $F(y)$ in terms of $y$, we prepare integral formulas
of the generalized trigonometric functions.
\begin{lem}\label{lemI1I2}
For any $z \in {\Bbb R}$,
\begin{align*}
 \int_{0}^{z}\left|\cos_{p}t\right|^{p}\,dt=&\frac{1}{p}\left((p-1)z+\left|\cos_{p}z\right|^{p-2}\cos_{p}z\sin_{p}z\right),\\
\int_{0}^{z}\left|\sin_{p}t\right|^{p}\,dt=&\frac{1}{p}\left(
z-\left|\cos_{p}z\right|^{p-2}\cos_{p}z\sin_{p}z\right).
\end{align*}
\end{lem}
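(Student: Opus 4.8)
The plan is to prove the first formula directly by integration by parts and then read off the second one from the Pythagorean identity \eqref{plusone}. The only ingredient I need beyond \eqref{diff2} and \eqref{plusone} is the derivative of the $p$-th power of $\cos_p$. Since $\sin_p$ solves \eqref{ssh} and $\cos_p=(\sin_p)'$, the equation in \eqref{ssh} is exactly
\begin{align*}
\left(\left|\cos_p t\right|^{p-2}\cos_p t\right)'=-(p-1)\left|\sin_p t\right|^{p-2}\sin_p t,
\end{align*}
and this holds classically on all of ${\Bbb R}$: although $s\mapsto|s|^{p-2}s$ need not be $C^1$ at $0$ when $1<p<2$, the quantity $|\cos_p t|^{p-2}\cos_p t=|(\sin_p t)'|^{p-2}(\sin_p t)'$ is precisely the term that \eqref{ssh} guarantees to be $C^1$, so no differentiability is lost at the zeros of $\cos_p$.

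First I would write $|\cos_p t|^p=\left(|\cos_p t|^{p-2}\cos_p t\right)\cos_p t=\left(|\cos_p t|^{p-2}\cos_p t\right)(\sin_p t)'$ using \eqref{diff2}, and integrate by parts over $[0,z]$. The boundary term is $|\cos_p z|^{p-2}\cos_p z\,\sin_p z$, the contribution at $0$ vanishing because $\sin_p 0=0$; the derivative identity above converts the remaining integral into $+(p-1)\int_0^z|\sin_p t|^p\,dt$. Next I would substitute $|\sin_p t|^p=1-|\cos_p t|^p$ from \eqref{plusone}, which produces $(p-1)z$ minus $(p-1)$ times the very integral I started with. Writing $I=\int_0^z|\cos_p t|^p\,dt$, this gives the linear relation $pI=(p-1)z+|\cos_p z|^{p-2}\cos_p z\,\sin_p z$, i.e.\ the first formula after dividing by $p$.

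Finally, the second formula needs no further computation: integrating \eqref{plusone} yields $\int_0^z|\cos_p t|^p\,dt+\int_0^z|\sin_p t|^p\,dt=z$, so subtracting the expression just found for $I$ from $z$ gives $\frac{1}{p}\left(z-|\cos_p z|^{p-2}\cos_p z\,\sin_p z\right)$, as claimed. The only point that genuinely requires care is the one settled in the first paragraph, namely the legitimacy of the pointwise derivative identity (and hence of the integration by parts) at the zeros of $\cos_p$ when $p<2$; this is handled by reading the identity off the ODE \eqref{ssh} rather than by differentiating $|\cos_p t|^{p-2}\cos_p t$ naively. Everything else is a routine one-line manipulation.
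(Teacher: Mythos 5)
Your proof is correct and follows essentially the same route as the paper: both rest on the identity $(|\cos_p t|^{p-2}\cos_p t)'=-(p-1)|\sin_p t|^{p-2}\sin_p t$ from \eqref{ssh}, one integration by parts, and the relation $\int_0^z|\cos_p t|^p\,dt+\int_0^z|\sin_p t|^p\,dt=z$ from \eqref{plusone}; you merely integrate $|\cos_p|^p$ by parts where the paper integrates $|\sin_p|^p$, which is the same computation rearranged. Your explicit remark on why the derivative identity holds at the zeros of $\cos_p$ for $1<p<2$ is a welcome extra precision that the paper leaves implicit.
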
 
\begin{proof}
The proof is same as that of Lemma 2 in \cite{Takeuchi2019}. Put 
\begin{align*}
I_{1}(z)=\int_{0}^{z}\left|\cos_{p}t\right|^{p}\,dt,\,\,\,I_{2}(z)=\int_{0}^{z}\left|\sin_{p}t\right|^{p}\,dt.
\end{align*}
Then we have from \eqref{plusone}, 
\begin{align}\label{I1I2-1}
I_{1}(z)+I_{2}(z)=z.
\end{align}
From \eqref{diff2} and \eqref{ssh}, we see 
\begin{align}\label{diff3}
\left(\left|\cos_{p}x\right|^{p-2}\cos_{p}x\right)'=-(p-1)\left|\sin_{p}x\right|^{p-2}\sin_{p}x;
\end{align}
hence
\begin{align}\label{I1I2-2}
 I_{2}(z)=&\int_{0}^{z}\left|\sin_{p}t\right|^{p-2}\left(\sin_{p}t\right)^2\,dt=
\int_{0}^{z}\frac{\left(\left|\cos_{p}t\right|^{p-2}\cos_{p}t\right)'}{-(p-1)}
\sin_{p}t\,dt\\
=&\left[
-\frac{\left|\cos_{p}t\right|^{p-2}\cos_{p}t}{p-1}\sin_{p}t
\right]_{0}^{z}+\int_{0}^{z}\frac{\left|\cos_{p}t\right|^{p-2}\cos_{p}t}{p-1}\cos_{p}t\,dt\nonumber\\
=&-\frac{\left|\cos_{p}z\right|^{p-2}\cos_{p}z}{p-1}\sin_{p}z+
\frac{I_{1}(z)}{p-1}.\nonumber
\end{align}
From \eqref{I1I2-1} and \eqref{I1I2-2}, we obtain the formulas.
\end{proof}
\begin{lem}
\label{lemF}
For $y \in (0,\pi_p/2]$, the function $F$ is expressed as follows:
\begin{align}\label{functionF}
F(y)=K^{p-1}\left(\left(\cot_{p}Ky\right)^{p-1}+\left|\cot_{p}K(\pi_{p}-y)\right|^{p-2}\cot_{p}K(\pi_{p}-y)\right).
\end{align}
\end{lem}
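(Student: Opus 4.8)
The plan is to evaluate $F(y)$ directly. By Lemma \ref{lemuy} the infimum defining $F(y)=\inf_{u\in W(y)}J(u)$ is attained at the explicit function $u_y$ of \eqref{uy}, so $F(y)=J(u_y)$ and it remains to compute $J(u_y)$. First I would substitute the two branches of $u_y$ into
\[
J(u_y)=\int_{0}^{\pi_{p}}|u_y'|^p\,dx-\lambda\int_{0}^{\pi_{p}}|u_y|^p\,dx,
\]
splitting each integral at $x=y$. On $(0,y)$ one has $u_y'(x)=K\cos_{p}Kx/\sin_{p}Ky$, and on $(y,\pi_{p})$ one has $u_y'(x)=-K\cos_{p}K(\pi_{p}-x)/\sin_{p}K(\pi_{p}-y)$, both by \eqref{diff2}. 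After the changes of variable $t=Kx$ on the left piece and $t=K(\pi_{p}-x)$ on the right piece, every integral reduces to $I_1$ or $I_2$ from Lemma \ref{lemI1I2} evaluated at $Ky$ or $K(\pi_{p}-y)$; since $Ky,\,K(\pi_{p}-y)\in(0,\pi_{p})$ the denominators $\sin_{p}Ky$ and $\sin_{p}K(\pi_{p}-y)$ are positive, which keeps the bookkeeping clean.

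Next I would collect terms. Recalling $\lambda_1=p-1$ and $K=(\lambda/\lambda_1)^{1/p}$, so that $\lambda=(p-1)K^p$, the two integrals combine into
\[
F(y)=K^{p-1}\left[\frac{I_1(Ky)-(p-1)I_2(Ky)}{(\sin_{p}Ky)^p}+\frac{I_1(K(\pi_{p}-y))-(p-1)I_2(K(\pi_{p}-y))}{(\sin_{p}K(\pi_{p}-y))^p}\right].
\]
The arithmetic heart of the argument is the identity $I_1(z)-(p-1)I_2(z)=|\cos_{p}z|^{p-2}\cos_{p}z\,\sin_{p}z$, which follows at once from Lemma \ref{lemI1I2}: the $(p-1)z$ terms cancel and the mixed term is multiplied by $1+(p-1)=p$, canceling the $1/p$ prefactor. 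Substituting this reduces $F(y)$ to a sum of two terms of the shape $|\cos_{p}z|^{p-2}\cos_{p}z\,\sin_{p}z/(\sin_{p}z)^p$ with $z=Ky$ and $z=K(\pi_{p}-y)$.

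The step requiring real care — and the one I expect to be the main obstacle — is the final sign bookkeeping that produces the \emph{asymmetric} appearance of absolute values in \eqref{functionF}. For the first argument $z=Ky$ we have $Ky\in(0,\pi_{p}/2)$ because $K\in(0,1)$ and $y\le\pi_{p}/2$, so $\cos_{p}Ky>0$ and the term collapses cleanly to $(\cot_{p}Ky)^{p-1}$. For the second argument $z=K(\pi_{p}-y)$, however, $z$ ranges over $(0,\pi_{p})$ and may exceed $\pi_{p}/2$ when $K$ is close to $1$, so $\cos_{p}z$ can be negative; since $\sin_{p}z>0$ throughout, the term equals $|\cos_{p}z|^{p-2}\cos_{p}z/(\sin_{p}z)^{p-1}=|\cot_{p}z|^{p-2}\cot_{p}z$, which retains the sign of $\cos_{p}z$ and cannot be written without the absolute value. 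Assembling the two simplified terms yields exactly \eqref{functionF}. The borderline case $\cos_{p}z=0$, possible when $K\ge 1/2$, causes no trouble, since $p-1>0$ makes that term vanish continuously.
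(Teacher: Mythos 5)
Your proposal is correct and follows essentially the same route as the paper: evaluate $F(y)=J(u_y)$ with the explicit minimizer from Lemma \ref{lemuy}, split at $x=y$, change variables to reduce to the integrals of Lemma \ref{lemI1I2}, and use $\lambda=(p-1)K^p$ to collapse each piece to a power of $\cot_p$. Your extra care with the sign of $\cos_p K(\pi_p-y)$ (which explains the asymmetric absolute value in \eqref{functionF}) and with the borderline case $\cot_p K(\pi_p-y)=0$ is a welcome elaboration of what the paper passes over with ``Similarly, we obtain.''
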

\begin{proof}
Since $F(y)=J(u_y)$, we have
\begin{align*}
F(y)=\int^{y}_{0}\left(\left|u'_{y}(x)\right|^{p}-\lambda \left|u_{y}(x)\right|^{p}\right)\,dx+
\int^{\pi_p}_{y}\left(\left|u'_{y}(x)\right|^{p}-\lambda \left|u_{y}(x)\right|^{p}\right)\,dx,
\end{align*}
where $u_{y}$ is as \eqref{uy} in Lemma \ref{lemuy}.
By Lemma \ref{lemI1I2}, the first term in the right-hand side is expressed as
\begin{align*}
\int^{y}_{0}\left(\left|u'_{y}(x)\right|^{p}\right.
&\left.-\lambda \left|u_{y}(x)\right|^{p}\right)\,dx\\
&=\frac{1}{\left(\sin_{p}Ky\right)^{p}}\int_{0}^{y}\left(K\cos_{p}Kx\right)^{p}\,dx-\frac{\lambda}{\left(\sin_{p}Ky\right)^{p}}
\int_{0}^{y}\left(\sin_{p}Kx\right)^{p}\,dx\\
&=\frac{K^{p-1}}{\left(\sin_{p}Ky\right)^{p}}\int_{0}^{Ky}\left(\cos_{p}t\right)^{p}\,dt
-\frac{\lambda K^{-1}}{\left(\sin_{p}Ky\right)^{p}}\int_{0}^{Ky}\left(\sin_{p}t\right)^{p}\,dt\\
&=K^{p-1}\left(\cot_{p}Ky\right)^{p-1}.
\end{align*}
Similarly, we obtain
\begin{align*}
\int^{\pi_p}_{y}\left(\left|u'_{y}(x)\right|^{p}-\lambda \left|u_{y}(x)\right|^{p}\right)\,dx
=K^{p-1}\left|\cot_{p}K(\pi_{p}-y)\right|^{p-2}\cot_{p}K(\pi_{p}-y).
\end{align*}
This completes the proof.
\end{proof}
We will evaluate the minimum of $F(y)$ for $y$.
\begin{lem}\label{monotone}
Put
\begin{align*}
f(x)=K^{p-1}\left|\cot_{p}{Kx}\right|^{p-2}\cot_{p}{Kx}, \quad x \in \left(0,\frac{\pi_p}{K}\right).
\end{align*}
Then,
\begin{align*}
\begin{cases}
\text{(i) $f(x)=-f(\frac{\pi_{p}}{K}-x)$, $x\in (0,\frac{\pi_{p}}{2K}]$,}\\
\text{(ii) $f(x)$ is monotone decreasing on $(0,\frac{\pi_{p}}{K})$,}\\
\text{(iii) $f(x)$ is convex on $(0,\frac{\pi_{p}}{2K}]$ and concave on $[\frac{\pi_{p}}{2K},\frac{\pi_{p}}{K})$.} 
\end{cases}
\end{align*}
\end{lem}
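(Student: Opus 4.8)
The plan is to reduce all three statements to properties of a single auxiliary function of the scaled variable $t=Kx$, and then to read them off from its first two derivatives. Writing
\[
g(t)=\frac{|\cos_{p}t|^{p-2}\cos_{p}t}{(\sin_{p}t)^{p-1}},\qquad t\in(0,\pi_{p}),
\]
I first note that on $(0,\pi_{p}/K)$ we have $\sin_{p}(Kx)>0$, so $|\cot_{p}Kx|^{p-2}\cot_{p}Kx=g(Kx)$ and hence $f(x)=K^{p-1}g(Kx)$. Because $K>0$, the signs of $f'$ and $f''$ at $x$ coincide with those of $g'$ and $g''$ at $Kx$, so it suffices to analyze $g$ on $(0,\pi_{p})$.

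For (i) I would use the reflection identities for the generalized trigonometric functions: from $\sin_{p}(\pi_{p}-t)=\sin_{p}t$ and $\cos_{p}=(\sin_{p})'$ one obtains $\cos_{p}(\pi_{p}-t)=-\cos_{p}t$, whence $\cot_{p}(\pi_{p}-t)=-\cot_{p}t$ and therefore $g(\pi_{p}-t)=-g(t)$. Rewriting this in the variable $x$ gives exactly $f(x)=-f(\pi_{p}/K-x)$ on $(0,\pi_{p}/(2K)]$.

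For (ii) and (iii) the key is to differentiate $g$ cleanly, and here lies the main obstacle: for $1<p<2$ the factor $|\cos_{p}t|^{p-2}$ blows up at $t=\pi_{p}/2$, where $\cos_{p}t=0$, so one cannot naively apply the product rule to $|\cos_{p}t|^{p-2}$ and $\cos_{p}t$ separately. I would circumvent this by treating $h(t)=|\cos_{p}t|^{p-2}\cos_{p}t$ as a single block and invoking \eqref{diff3}, which yields $h'(t)=-(p-1)(\sin_{p}t)^{p-1}$ valid across $\pi_{p}/2$. Applying the quotient rule to $g=h/(\sin_{p})^{p-1}$ and simplifying by the $p$-Pythagorean identity \eqref{plusone} then collapses everything to
\[
g'(t)=-\frac{p-1}{(\sin_{p}t)^{p}}<0\qquad\text{on }(0,\pi_{p}),
\]
so $g$, and hence $f$, is strictly decreasing, giving (ii).

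Finally, for (iii) I would differentiate once more to get
\[
g''(t)=\frac{p(p-1)\cos_{p}t}{(\sin_{p}t)^{p+1}},
\]
whose sign is that of $\cos_{p}t$: thus $g''>0$ on $(0,\pi_{p}/2)$ and $g''<0$ on $(\pi_{p}/2,\pi_{p})$. Translating back through $f''(x)=K^{p+1}g''(Kx)$ with $K>0$ yields convexity of $f$ on $(0,\pi_{p}/(2K)]$ and concavity on $[\pi_{p}/(2K),\pi_{p}/K)$, the closed endpoints being covered by continuity since $f''(\pi_{p}/(2K))=0$. (Concavity on the right half could alternatively be deduced formally from convexity on the left half together with the antisymmetry (i).) The only genuine subtlety throughout is the regularity at $t=\pi_{p}/2$, which is resolved once and for all by \eqref{diff3}.
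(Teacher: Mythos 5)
Your proof is correct and follows essentially the same route as the paper: both reduce everything to the explicit formula $f'(x)=-\dfrac{(p-1)K^{p}}{(\sin_{p}Kx)^{p}}$, obtained via \eqref{diff3} and the identity \eqref{plusone}, deriving (i) from the reflection symmetry of $\cot_{p}$, (ii) from the sign of $f'$, and (iii) from the behaviour of $f'$. The only (minor) differences are that you make explicit the care needed in differentiating $|\cos_{p}t|^{p-2}\cos_{p}t$ as a single block at $t=\pi_{p}/2$ when $1<p<2$ --- a point the paper leaves implicit --- and that you pass to the second derivative $f''$, whereas the paper reads convexity/concavity directly off the monotonicity of $f'$ induced by that of $\sin_{p}$.
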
 
\begin{proof}
The definition of $\cot_p{x}$ immediately yields (i).   
From \eqref{diff2} and \eqref{diff3} we obtain
\begin{align}\label{cot}
f'(x)=-\frac{(p-1)K^p}{\left(\sin_{p}{Kx}\right)^{p}}<0,\quad x\in \left(0,\frac{\pi_{p}}{K}\right).
\end{align}
Assertions (ii) and (iii) are obtained from \eqref{cot}. 
\end{proof}
\begin{lem}\label{lemmin}
The infimum of $F(y)$, $y \in (0,\pi_p/2]$, is attained at $y=\pi_{p}/2$.
Moreover,
$$F\left(\frac{\pi_p}{2}\right)=2K^{p-1}\left(\cot_{p}\left(\frac{K\pi_{p}}{2}\right)\right)^{p-1}
=C(p,\lambda).$$
\end{lem}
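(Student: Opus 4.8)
The plan is to exploit the reflection symmetry that $F$ inherits from the function $f$ of Lemma~\ref{monotone}. First I would record that on the range $y\in(0,\pi_p/2]$ we have $Ky\in(0,K\pi_p/2]\subset(0,\pi_p/2)$ (since $K\in(0,1)$), so that $\cot_p Ky>0$ and the first term of the expression \eqref{functionF} for $F$ equals $K^{p-1}\left|\cot_p Ky\right|^{p-2}\cot_p Ky=f(y)$, while the second term is exactly $f(\pi_p-y)$. Hence $F(y)=f(y)+f(\pi_p-y)$ on $(0,\pi_p/2]$. Because $y,\pi_p-y\in(0,\pi_p/K)$ (again using $K<1$, so $\pi_p<\pi_p/K$), Lemma~\ref{monotone} guarantees that $f$ is $C^1$ there, and therefore $F$ is differentiable with $F'(y)=f'(y)-f'(\pi_p-y)$.

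Next I would substitute the derivative formula \eqref{cot}, namely $f'(x)=-(p-1)K^p/(\sin_p Kx)^p$, to obtain
\begin{align*}
F'(y)=(p-1)K^p\left(\frac{1}{\left(\sin_p K(\pi_p-y)\right)^p}-\frac{1}{\left(\sin_p Ky\right)^p}\right).
\end{align*}
Since both $\sin_p Ky$ and $\sin_p K(\pi_p-y)$ are positive, the sign condition $F'(y)\le 0$ is equivalent to the comparison $\sin_p K(\pi_p-y)\ge\sin_p Ky$, and the entire lemma reduces to establishing this inequality for $y\in(0,\pi_p/2)$, with strictness so as to conclude that $F$ is strictly decreasing.

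Proving this comparison is the main (though mild) obstacle, and I would settle it by symmetry. Writing $s=Ky$ and $t=K(\pi_p-y)$, one has $0<s\le K\pi_p/2\le t$ and $s+t=K\pi_p<\pi_p$; recall that $\sin_p$ is positive and strictly increasing on $(0,\pi_p/2]$ and satisfies $\sin_p\xi=\sin_p(\pi_p-\xi)$. If $t\le\pi_p/2$, then $s\le t\le\pi_p/2$ gives $\sin_p s\le\sin_p t$ directly. If instead $t>\pi_p/2$, then $\pi_p-t<\pi_p/2$ and $\pi_p-t=\pi_p-(s+t)+s=\pi_p(1-K)+s>s$, whence $\sin_p s<\sin_p(\pi_p-t)=\sin_p t$. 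In the first case the inequality is strict whenever $y<\pi_p/2$ because then $s<t$, and in the second case it is strict because $K<1$ forces $\pi_p-t>s$. Thus $F'(y)<0$ on $(0,\pi_p/2)$, so the continuous, strictly decreasing function $F$ attains its infimum over $(0,\pi_p/2]$ at the right endpoint $y=\pi_p/2$. Finally, evaluating $F(\pi_p/2)=2f(\pi_p/2)=2K^{p-1}\left(\cot_p(K\pi_p/2)\right)^{p-1}$, which is precisely the first branch of \eqref{LyapunovLP}, i.e.\ $C(p,\lambda)$, completes the proof.
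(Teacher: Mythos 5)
Your proof is correct and follows essentially the same route as the paper: both differentiate $F(y)=f(y)+f(\pi_p-y)$ and show $F'<0$ on $(0,\pi_p/2)$ by a case split on whether $K(\pi_p-y)$ exceeds $\pi_p/2$, using the symmetry $\sin_p\xi=\sin_p(\pi_p-\xi)$ and the monotonicity of $\sin_p$ on $(0,\pi_p/2]$. The only cosmetic difference is that you substitute the explicit formula \eqref{cot} for $f'$ and compare $\sin_p$ values directly, whereas the paper phrases the identical facts as monotonicity and symmetry of $f'$ about $x=\pi_p/(2K)$.
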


\begin{proof}
Let $f$ be the function defined in Lemma \ref{monotone}.
Differentiating \eqref{functionF}, i.e., $F(y)=f(y)+f(\pi_p-y)$,
in Lemma \ref{lemF}, we have $F'(y)=f'(y)-f'(\pi_p-y)$.

Let $y \in (0,\pi_p/2)$. 
In the case $\pi_p-y \leq \pi_p/(2K)$,
since $f'$ is increasing in $(0,\pi_p/(2K))$, it follows that 
$F'(y)=f'(y)-f'(\pi_p-y)<0$. Suppose $\pi_p-y > \pi_p/(2K)$.
The symmetry of $f'$ to $x=\pi_p/(2K)$ yields that $f'(\pi_p-y)=f'(z)$,
where $z=(1/K-1)\pi_p+y \in (y,\pi_p/(2K))$. 
Since $f'$ is increasing in $(0,\pi_p/(2K))$, 
it follows that $F'(y)=f'(y)-f'(z)<0$.
From above, we conclude that $F'(y)<0$ in $(0,\pi_p/2)$
and $F(\pi_p/2)=2f(\pi_p/2)$ is the minimum of $F(y)$.
\end{proof}

We now turn to the relaxed problem \eqref{relaxed}.
From Lemma \ref{lemmin}, we obtain
\begin{align*}
\inf_{y\in(0,\pi_{p}/2]}\inf_{u\in W(y)}J(u)
&=\inf_{y\in(0,\pi_{p}/2]}F(y)=F\left(\frac{\pi_p}{2}\right)=C(p,\lambda).
\end{align*}
Since $M(y)\subset W(y)$, in general, it holds
\begin{align*}
\inf_{u\in W(y)}J(u)\leq \inf_{u\in M(y)}J(u);
\end{align*}
hence $C(p,\lambda) \leq \tilde{C}(p,\lambda)$.
However, in the case $y=\pi_{p}/2$, the maximum of $u_{y}$ is attained at $x=\pi_{p}/2$ and its value is 1 (so, $u_{\pi_{p}/2} \in M(\pi_{p}/2)$). Therefore,
\begin{align*}
C(p,\lambda)=\inf_{u\in W(\pi_{p}/2)}J(u)
=\inf_{u\in M(\pi_{p}/2)}J(u) \geq \tilde{C}(p,\lambda).
\end{align*}
Thus, $\tilde{C}(p,\lambda)=C(p,\lambda)$, and we have shown Proposition \ref{propC}.

\begin{rem}\label{rem3}
The function $u_y$ does not always belong to $M(y)$.
Indeed, assume $K \in (1/2,1)$, i.e., $\lambda \in (\lambda_1/2^p,\lambda_1)$. 
Let $y \in (0,\pi_p/2)$ be a number satisfying
\begin{align*}
0<y<\left(1-\frac{1}{2K}\right)\pi_{p}<\frac{\pi_{p}}{2}.
\end{align*}
Then, since $K(\pi_p-y)<\pi_p/2$,  
\begin{align*}
u_{y}\left(\left(1-\frac{1}{2K}\right)\pi_{p}\right)=\frac{\sin_{p}(\pi_p/2)}{\sin_{p}K(\pi_{p}-y)}>\frac{1}{\sin_{p}(\pi_p/2)}=1;
\end{align*}
hence $u_{y}\not\in M(y)$. On the other hand, if $y=\pi_{p}/2$, we have $u_{\pi_{p}/2}\in M(\pi_{p}/2)$. 
Figure \ref{fig4} shows the graph of $u_{y}$ which does not belong to $M(y)$.
\begin{figure}[htb]
 \begin{center}
 \includegraphics[scale=0.7 ]{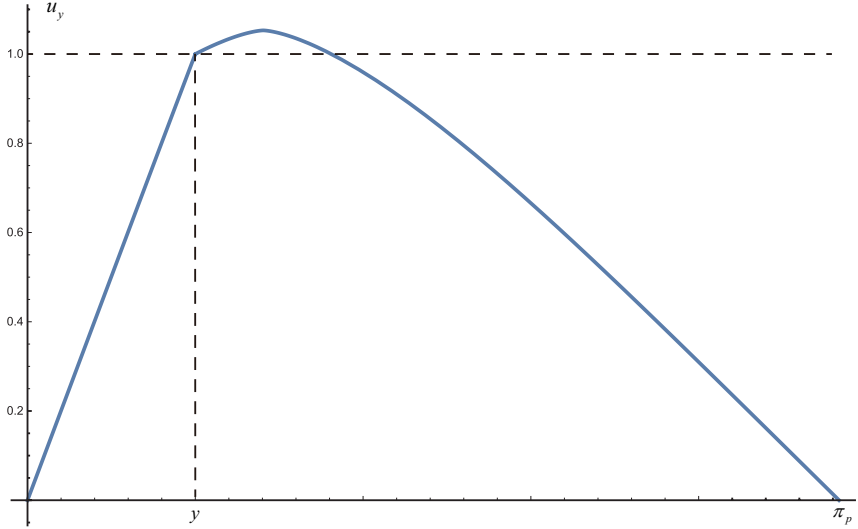}
\caption{The graph of $u_{y}$ which does not belong to $M(y)$.}
\label{fig4}
 \end{center}
\end{figure}
\end{rem}
\subsection{The case $\lambda=0$}
In this case, we show
\begin{proposition}\label{prop=}
\begin{align*}
\tilde{C}(p,\lambda)=C(p,\lambda)=\frac{2^p}{\pi_p^{p-1}}.
\end{align*}
\end{proposition}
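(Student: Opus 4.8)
The plan is to run exactly the scheme of the previous subsection, with the generalized-trigonometric computations replaced by their elementary $\lambda=0$ analogues. First I would redo the first-variation computation of Lemma~\ref{lemuy}: when $\lambda=0$ the Euler--Lagrange equation characterizing the minimizer $u_y$ of $\inf_{u\in W(y)}J(u)$ reduces to $\left(|u_y'|^{p-2}u_y'\right)'=0$ on each of $(0,y)$ and $(y,\pi_p)$. Since $s\mapsto|s|^{p-2}s$ is a bijection of ${\Bbb R}$, this forces $u_y'$ to be constant on each piece, so together with $u_y(0)=0$, $u_y(y)=1$, $u_y(\pi_p)=0$ the minimizer is the piecewise-linear tent function
\begin{align*}
u_y(x)=
\begin{cases}
\dfrac{x}{y}, & 0\le x<y,\\
\dfrac{\pi_p-x}{\pi_p-y}, & y\le x\le \pi_p.
\end{cases}
\end{align*}
One may also obtain this directly from \eqref{uy} by letting $K\to0^+$, since $\sin_p t\sim t$ near the origin.

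Next I would evaluate $F(y)=J(u_y)$. As $u_y'=1/y$ on $(0,y)$ and $u_y'=-1/(\pi_p-y)$ on $(y,\pi_p)$, a direct integration gives
\begin{align*}
F(y)=\int_0^{\pi_p}|u_y'|^p\,dx=y\cdot y^{-p}+(\pi_p-y)\cdot(\pi_p-y)^{-p}=y^{1-p}+(\pi_p-y)^{1-p}.
\end{align*}
Then I would minimize $F$ over $(0,\pi_p/2]$. Differentiating,
\begin{align*}
F'(y)=(1-p)\left(y^{-p}-(\pi_p-y)^{-p}\right),
\end{align*}
and for $y\in(0,\pi_p/2)$ one has $y<\pi_p-y$, so $y^{-p}>(\pi_p-y)^{-p}$; since $1-p<0$ this yields $F'(y)<0$. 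Hence $F$ is strictly decreasing on $(0,\pi_p/2]$ and its infimum is attained at the endpoint $y=\pi_p/2$, where
\begin{align*}
\inf_{y\in(0,\pi_{p}/2]}F(y)=F\!\left(\frac{\pi_p}{2}\right)=2\left(\frac{\pi_p}{2}\right)^{1-p}=\frac{2^p}{\pi_p^{p-1}}=C(p,0).
\end{align*}

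Finally, to pass from the relaxed problem back to $\tilde{C}(p,\lambda)$ I would repeat the concluding argument of the case $0<\lambda<\lambda_1$ verbatim. Because $M(y)\subset W(y)$, the relaxed infimum is a lower bound, giving $C(p,0)\le\tilde{C}(p,0)$; and since the tent function $u_{\pi_p/2}$ attains its maximum value $1$ precisely at $x=\pi_p/2$, it lies in $M(\pi_p/2)$, so $\tilde{C}(p,0)\le J(u_{\pi_p/2})=F(\pi_p/2)=C(p,0)$. Combining these gives $\tilde{C}(p,0)=C(p,0)=2^p/\pi_p^{p-1}$. I do not expect any genuine obstacle in this case: the minimizer is explicit and $F$ is elementary, so the only point needing care is the same one as before, namely that relaxing $M(y)$ to $W(y)$ is harmless exactly because the minimizing profile $u_{\pi_p/2}$ meets the constraint $u\in M(\pi_p/2)$ at the optimal $y$. (In fact, unlike Remark~\ref{rem3}, here every tent $u_y$ already peaks at $x=y$ with value $1$, so $u_y\in M(y)$ for all $y$, which makes the relaxation tight throughout.)
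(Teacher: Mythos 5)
Your proposal is correct and follows essentially the same route as the paper: identify the tent function as the minimizer of the relaxed problem on $W(y)$, compute $F(y)=y^{1-p}+(\pi_p-y)^{1-p}$, minimize at $y=\pi_p/2$, and close the gap between $W(y)$ and $M(y)$ via $u_{\pi_p/2}\in M(\pi_p/2)$. The only (immaterial) difference is that you minimize $F$ by showing $F'<0$ on $(0,\pi_p/2)$, whereas the paper invokes Jensen's inequality for the convex function $x\mapsto x^{1-p}$.
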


As with Lemma \ref{lemuy}, we obtain the next lemma.
\begin{lem}\label{lemuy2=}
For $y \in (0,\pi_p/2]$, $\inf_{u\in W(y)}J(u)$ is attained by the following $u_y\in W(y)$. 
\begin{align}\label{ush=}
u_{y}(x)=
\begin{cases}
\frac{x}{y},& 0\leq x< y,\\
\frac{\pi_{p}-x}{\pi_{p}-y},& y\leq x\leq \pi_{p}. 
\end{cases}
\end{align}
\end{lem}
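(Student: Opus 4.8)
The plan is to mirror the proof of Lemma \ref{lemuy} verbatim, specializing to $\lambda=0$, so that $J(u)=\int_{0}^{\pi_{p}}|u'|^{p}\,dx$. First I would establish that $\inf_{u\in W(y)}J(u)$ is attained by some $u_{y}\in W(y)$, exactly as stated in Lemma \ref{lemuy}: the existence follows from the same weak-compactness and weak lower semicontinuity argument as in Lemma \ref{lem1}, noting that $J$ is coercive on $W(y)$ and bounded below away from $0$ there, since the constraints $u(0)=0$ and $u(y)=1$ force $\int_{0}^{y}|u'|^{p}\,dx\geq y^{1-p}>0$ by H\"older's inequality.

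Next I would compute the first variation. For any $\varphi\in W^{1,p}_{0}(0,\pi_{p})$ with $\varphi(0)=\varphi(y)=\varphi(\pi_{p})=0$, integrating by parts on $(0,y)$ and $(y,\pi_{p})$ separately and using that the boundary terms vanish at $0$, $y$, and $\pi_{p}$, I obtain, just as in Lemma \ref{lemuy} but with the $\lambda$-term absent,
\[
J'(u_{y})[\varphi]=-p\int_{0}^{y}\left(|u_{y}'|^{p-2}u_{y}'\right)'\varphi\,dx-p\int_{y}^{\pi_{p}}\left(|u_{y}'|^{p-2}u_{y}'\right)'\varphi\,dx.
\]
Setting $J'(u_{y})[\varphi]=0$ for all such $\varphi$ yields the Euler--Lagrange equation $\left(|u_{y}'|^{p-2}u_{y}'\right)'=0$ on each of $(0,y)$ and $(y,\pi_{p})$, together with the interpolation conditions $u_{y}(0)=0$, $u_{y}(y)=1$, and $u_{y}(\pi_{p})=0$.

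Then I would integrate the Euler--Lagrange equation. On each subinterval $\left(|u_{y}'|^{p-2}u_{y}'\right)'=0$ forces $|u_{y}'|^{p-2}u_{y}'$ to be constant; since $s\mapsto|s|^{p-2}s$ is a strictly increasing bijection of ${\Bbb R}$ for $1<p<\infty$, $u_{y}'$ is itself constant on each subinterval, so $u_{y}$ is affine there. Imposing $u_{y}(0)=0$ and $u_{y}(y)=1$ gives $u_{y}(x)=x/y$ on $[0,y)$, while $u_{y}(y)=1$ and $u_{y}(\pi_{p})=0$ give $u_{y}(x)=(\pi_{p}-x)/(\pi_{p}-y)$ on $[y,\pi_{p}]$, which is precisely \eqref{ush=}.

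I do not expect a genuine obstacle here: this is merely the $\lambda=0$ degeneration of Lemma \ref{lemuy}, in which the generalized-sine equation \eqref{ssh} collapses to the trivial $\left(|u_{y}'|^{p-2}u_{y}'\right)'=0$ and the optimal profile becomes piecewise affine rather than a ratio of generalized sines. The only point deserving a word of care is optimality: because $t\mapsto|t|^{p}$ is strictly convex for $p>1$, the functional $J$ is strictly convex on the affine constraint set $W(y)$, so the critical point found above is the unique global minimizer, confirming that $u_{y}$ in \eqref{ush=} attains the infimum.
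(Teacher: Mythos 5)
Your proposal is correct and follows essentially the same route as the paper: the paper's proof likewise invokes the existence argument of Lemma \ref{lem1}, notes that the minimizer satisfies \eqref{left} and \eqref{right} with $\lambda=0$ as in Lemma \ref{lemuy}, and concludes that the unique solutions of these boundary value problems are the affine pieces in \eqref{ush=}. Your added remarks on coercivity and strict convexity are sound but not needed beyond what the paper's cross-references already supply.
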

\begin{proof}
The existence of the minimizer $u_y\in W(y)$ of $J$ is assured in a similar way to Lemma \ref{lem1}. 
Since $u_{y}$ is a minimizer of $J$ for all $u\in W(y)$, as in the proof of Lemma \ref{lemuy}, $u_{y}$ satisfies \eqref{left} and \eqref{right}. In this case, solutions of \eqref{left} and \eqref{right} are uniquely expressed as 
the functions of \eqref{ush=}, respectively.
\end{proof}
\begin{lem}
\label{hF=}
For $y \in (0,\pi_p/2]$, the function $F$ is expressed as follows:
\begin{align*}
F(y)=\frac{1}{y^{p-1}}+\frac{1}{(\pi_p-y)^{p-1}}.
\end{align*}
\end{lem}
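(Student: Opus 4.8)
The plan is to evaluate $F(y)=J(u_y)$ directly, using the explicit minimizer $u_y$ furnished by Lemma \ref{lemuy2=}. Since we are in the case $\lambda=0$, the functional collapses to $J(u)=\int_0^{\pi_p}|u'|^p\,dx$, so the term $-\lambda\int_0^{\pi_p}|u|^p\,dx$ vanishes and only the gradient integral survives. This is the crucial simplification that makes the computation elementary.

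First I would differentiate the piecewise-linear minimizer \eqref{ush=} on each of its two pieces. On $(0,y)$ the function is linear with slope $1/y$, and on $(y,\pi_p)$ it is linear with slope $-1/(\pi_p-y)$. Consequently $|u_y'(x)|^p$ is the constant $y^{-p}$ throughout $(0,y)$ and the constant $(\pi_p-y)^{-p}$ throughout $(y,\pi_p)$.

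Next I would integrate these constants over the respective intervals. The contribution from $(0,y)$ is $y\cdot y^{-p}=y^{-(p-1)}$, while the contribution from $(y,\pi_p)$ is $(\pi_p-y)\cdot(\pi_p-y)^{-p}=(\pi_p-y)^{-(p-1)}$. Adding the two pieces yields
\begin{align*}
F(y)=\frac{1}{y^{p-1}}+\frac{1}{(\pi_p-y)^{p-1}},
\end{align*}
which is exactly the asserted expression.

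There is essentially no obstacle in this lemma; unlike the case $0<\lambda<\lambda_1$, where the generalized trigonometric integral formulas of Lemma \ref{lemI1I2} were needed to evaluate $J(u_y)$, here the minimizer has a constant derivative on each subinterval and the integration is immediate. The only minor point worth noting is that $u_y$ is merely Lipschitz (piecewise $C^1$ with a corner at $x=y$), but this causes no difficulty since $u_y\in W^{1,p}_0(0,\pi_p)$ and $|u_y'|^p$ is integrable on each subinterval.
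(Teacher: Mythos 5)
Your computation is correct and follows exactly the same route as the paper: substitute the piecewise-linear minimizer from Lemma \ref{lemuy2=} into $J$ (which reduces to $\int_0^{\pi_p}|u'|^p\,dx$ since $\lambda=0$) and integrate the constant slopes over the two subintervals. The paper's proof is just a terser version of the same calculation.
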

\begin{proof}
Since $F(y)=J(u_y)$, where $u_{y}$ is as \eqref{ush=} in Lemma \ref{lemuy2=},
we have
\begin{align*}
F(y)&=\int^{y}_{0}\left|u'_{y}(x)\right|^{p}\,dx+
\int^{\pi_p}_{y}\left|u'_{y}(x)\right|^{p}\,dx
=\frac{1}{y^{p-1}}+\frac{1}{(\pi_p-y)^{p-1}}.
\end{align*}
This completes the proof.
\end{proof}
\begin{lem}
The infimum of $F(y)$, $y \in (0,\pi_p/2]$, is attained at $y=\pi_{p}/2$.
Moreover,
$$F(\pi_p/2)=\frac{2^p}{\pi_p^{p-1}}
=C(p,\lambda).$$
\end{lem}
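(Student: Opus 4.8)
The plan is to minimize the explicit function $F(y)=y^{-(p-1)}+(\pi_p-y)^{-(p-1)}$ supplied by Lemma \ref{hF=} using elementary one-variable calculus; unlike the case $0<\lambda<\lambda_1$, no generalized trigonometric functions intervene here, so the argument is considerably simpler. First I would differentiate, obtaining
\begin{align*}
F'(y)=(p-1)\left((\pi_p-y)^{-p}-y^{-p}\right),\quad y\in\left(0,\frac{\pi_p}{2}\right).
\end{align*}
Since $t\mapsto t^{-p}$ is strictly decreasing on $(0,\infty)$ and $0<y<\pi_p-y$ whenever $y\in(0,\pi_p/2)$, the bracket is negative, so $F'(y)<0$ on the whole open interval. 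Hence $F$ is strictly decreasing on $(0,\pi_p/2]$ and its infimum is attained at the right endpoint $y=\pi_p/2$.

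The next step is simply to evaluate and simplify:
\begin{align*}
F\left(\frac{\pi_p}{2}\right)=\frac{2}{(\pi_p/2)^{p-1}}=\frac{2^p}{\pi_p^{p-1}},
\end{align*}
which is precisely the constant $C(p,\lambda)$ recorded in Theorem \ref{thm1} for $\lambda=0$; this establishes the displayed identity in the statement.

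To then close the subsection with $\tilde{C}(p,0)=C(p,0)$, I would repeat verbatim the reasoning used for $0<\lambda<\lambda_1$: the relaxed problem \eqref{relaxed} gives $\inf_{y}F(y)=F(\pi_p/2)=C(p,0)$, and the inclusion $M(y)\subset W(y)$ yields $C(p,0)\le\tilde{C}(p,0)$, while the minimizer $u_{\pi_p/2}$ of Lemma \ref{lemuy2=} attains its maximum value $1$ at $x=\pi_p/2$, so that $u_{\pi_p/2}\in M(\pi_p/2)$ and the reverse inequality holds.

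I anticipate no genuine obstacle in this case: the only point deserving a moment's care is the sign of $F'$, which follows at once from the strict monotonicity of $t\mapsto t^{-p}$ (equivalently from the symmetry $F(y)=F(\pi_p-y)$ together with strict convexity of $t\mapsto t^{-(p-1)}$). The computation is elementary throughout, in contrast to the generalized-trigonometric estimates required for $\lambda\neq 0$.
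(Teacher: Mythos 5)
Your proof is correct. The only difference from the paper is the mechanism used to minimize the explicit function $F(y)=y^{-(p-1)}+(\pi_p-y)^{-(p-1)}$: the paper applies Jensen's inequality to the convex function $x\mapsto x^{-(p-1)}$ at the points $y$ and $\pi_p-y$, giving $F(y)\ge 2^p/(y+\pi_p-y)^{p-1}$ in one line with equality exactly at $y=\pi_p/2$, whereas you differentiate and observe that $F'(y)=(p-1)\bigl((\pi_p-y)^{-p}-y^{-p}\bigr)<0$ on $(0,\pi_p/2)$, so $F$ is strictly decreasing and the infimum sits at the right endpoint. Your route is in fact the one the paper itself uses in the case $0<\lambda<\lambda_1$ (Lemma \ref{lemmin}), and it additionally yields the strict monotonicity of $F$ on $(0,\pi_p/2]$, which Jensen does not directly give; the paper's Jensen argument is marginally shorter and is the template it reuses for $\lambda<0$. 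Your evaluation $F(\pi_p/2)=2^p/\pi_p^{p-1}=C(p,0)$ and the concluding passage from the relaxed problem to $\tilde{C}(p,0)=C(p,0)$ via $u_{\pi_p/2}\in M(\pi_p/2)$ match the paper's "similar argument to Proposition \ref{propC}" exactly.
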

\begin{proof}
By Lemma \ref{hF=} and Jensen's inequality with $1/x^{p-1}$, we have
\begin{align*}
F(y) \geq \frac{2^p}{(y+\pi_p-y)^{p-1}}=\frac{2^p}{\pi_p^{p-1}}.
\end{align*}
The equality holds when $y=\pi_p-y$, i.e., $y=\pi_p/2$.
\end{proof}

Similar argument to Proposition \ref{propC} yields Proposition \ref{prop=}.

\subsection{The case $\lambda<0$}
In this case, we show
\begin{proposition}\label{propD}
\begin{align*}
\tilde{C}(p,\lambda)=C(p,\lambda)=
2K^{p-1}\left(\coth_{p}\left(\frac{K\pi_{p}}{2}\right)\right)^{p-1},
\end{align*}
where $K=(-\lambda/\lambda_1)^{1/p} \in (0,\infty)$.
\end{proposition}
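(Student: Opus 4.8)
The plan is to run the same scheme used for $0<\lambda<\lambda_1$, replacing the generalized trigonometric functions by their hyperbolic analogues throughout, with $K=(-\lambda/\lambda_1)^{1/p}\in(0,\infty)$. First I would prove the hyperbolic counterpart of Lemma \ref{lemuy}: for $y\in(0,\pi_p/2]$ the infimum $\inf_{u\in W(y)}J(u)$ is attained by
\begin{align*}
u_y(x)=
\begin{cases}
\frac{\sinh_p Kx}{\sinh_p Ky},& 0\le x<y,\\
\frac{\sinh_p K(\pi_p-x)}{\sinh_p K(\pi_p-y)},& y\le x\le\pi_p.
\end{cases}
\end{align*}
Existence of a minimizer follows exactly as in Lemma \ref{lem1}, and the Euler--Lagrange computation of Lemma \ref{lemuy} applies verbatim (now with $\lambda<0$), so that $u_y$ solves \eqref{left}--\eqref{right}. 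Since $\lambda=-(p-1)K^p$, equation \eqref{ssh2} identifies the solutions vanishing at $0$ (resp.\ $\pi_p$) as multiples of $\sinh_p Kx$ (resp.\ $\sinh_p K(\pi_p-x)$), and the constraint $u_y(y)=1$ fixes the constants.

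Next I would establish the hyperbolic integral formulas analogous to Lemma \ref{lemI1I2}, namely
\begin{align*}
\int_0^z(\cosh_p t)^p\,dt&=\frac1p\left((p-1)z+(\cosh_p z)^{p-1}\sinh_p z\right),\\
\int_0^z|\sinh_p t|^p\,dt&=\frac1p\left((\cosh_p z)^{p-1}\sinh_p z-z\right).
\end{align*}
These come from \eqref{minusone}, which gives $I_1(z)-I_2(z)=z$ with $I_1=\int_0^z(\cosh_p t)^p\,dt$ and $I_2=\int_0^z|\sinh_p t|^p\,dt$, together with the relation $((\cosh_p x)^{p-1})'=(p-1)|\sinh_p x|^{p-2}\sinh_p x$ (obtained from \eqref{diff2h} and \eqref{ssh2}) and one integration by parts. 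Substituting $u_y$ and using these formulas — the two contributions in $(p-1)z$ cancel, since $-\lambda/K=(p-1)K^{p-1}$ — yields the hyperbolic version of Lemma \ref{lemF}:
\begin{align*}
F(y)=K^{p-1}\left((\coth_p Ky)^{p-1}+(\coth_p K(\pi_p-y))^{p-1}\right),\quad y\in(0,\pi_p/2].
\end{align*}

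I would then minimize $F(y)=h(y)+h(\pi_p-y)$, where $h(x)=K^{p-1}(\coth_p Kx)^{p-1}$ on $(0,\infty)$. Differentiating as in Lemma \ref{monotone}, using \eqref{diff2h}, the relation above, and \eqref{minusone}, I expect the clean formula
\begin{align*}
h'(x)=-\frac{(p-1)K^p}{(\sinh_p Kx)^p}<0,
\end{align*}
the exact hyperbolic counterpart of \eqref{cot}. Because $\sinh_p$ is monotone increasing on $(0,\infty)$, the factor $1/(\sinh_p Kx)^p$ is decreasing, so $h'$ is increasing and $h$ is convex. Hence for $y\in(0,\pi_p/2)$ we have $y<\pi_p-y$, whence $F'(y)=h'(y)-h'(\pi_p-y)<0$; thus $F$ is strictly decreasing on $(0,\pi_p/2]$ and attains its minimum at $y=\pi_p/2$, with $F(\pi_p/2)=2K^{p-1}(\coth_p(K\pi_p/2))^{p-1}=C(p,\lambda)$.

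Finally I would close the argument exactly as in the case $0<\lambda<\lambda_1$. Since $M(y)\subset W(y)$, the relaxed problem \eqref{relaxed} gives $C(p,\lambda)=\inf_{y}\inf_{u\in W(y)}J(u)\le\tilde{C}(p,\lambda)$. For the reverse inequality, note that $u_{\pi_p/2}$ is symmetric about $\pi_p/2$ and increasing on $[0,\pi_p/2]$ (as $\sinh_p$ is increasing), so its maximum equals $u_{\pi_p/2}(\pi_p/2)=1$ and $u_{\pi_p/2}\in M(\pi_p/2)$; therefore $C(p,\lambda)=\inf_{u\in W(\pi_p/2)}J(u)=\inf_{u\in M(\pi_p/2)}J(u)\ge\tilde{C}(p,\lambda)$, proving $\tilde{C}(p,\lambda)=C(p,\lambda)$. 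All the computations are routine once \eqref{minusone} and \eqref{ssh2} are in hand, so there is no serious obstacle here; the only point demanding attention is the convexity of $h$, and this is in fact \emph{easier} than its trigonometric analogue in Lemma \ref{lemmin}, because the global monotonicity of $\sinh_p$ on $(0,\infty)$ removes any need for a symmetry-about-$\pi_p/(2K)$ argument (indeed, for the same reason $u_y\in M(y)$ for every $y$, unlike the situation described in Remark \ref{rem3}).
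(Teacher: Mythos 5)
Your proposal is correct and follows essentially the same route as the paper: the same hyperbolic minimizer $u_y$, the same integral identities derived from \eqref{minusone} and \eqref{ssh2}, the same formula for $F(y)$, and the same convexity of $x\mapsto K^{p-1}(\coth_p Kx)^{p-1}$ via $h'(x)=-(p-1)K^p/(\sinh_p Kx)^p$ (the paper phrases the last step as Jensen's inequality rather than the sign of $F'$, which is an immaterial difference). Your closing observation that the $\lambda<0$ case is simpler because $\sinh_p$ is globally increasing, so $u_y\in M(y)$ for every $y$, matches the paper's final remark.
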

As with Lemma \ref{lemuy}, we obtain the next lemma.
\begin{lem}\label{lemuy2}
For $y \in (0,\pi_p/2]$, $\inf_{u\in W(y)}J(u)$ is attained by the following $u_y\in W(y)$. 
\begin{align}\label{ush}
u_{y}(x)=
\begin{cases}
\frac{\sinh_{p}Kx}{\sinh_{p}Ky},& 0\leq x< y,\\
\frac{\sinh_{p}K\left(\pi_{p}-x\right)}{\sinh_{p}K\left(\pi_{p}-y\right)},& y\leq x\leq \pi_{p}. 
\end{cases}
\end{align}
\end{lem}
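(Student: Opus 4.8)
\emph{Proof proposal.} The plan is to follow the proof of Lemma~\ref{lemuy} line by line, replacing the generalized trigonometric functions by their hyperbolic analogues. First I would establish the existence of a minimizer $u_y\in W(y)$ of $J$ by the compactness argument of Lemma~\ref{lem1}: the two-sided bound \eqref{bounded} confines any minimizing sequence to a sublevel set of $\int_0^{\pi_p}|u'|^p\,dx$, which is weakly compact in $W^{1,p}_0(0,\pi_p)$, while $J$ is weakly lower semicontinuous and $W(y)$ is weakly closed because the evaluation $u\mapsto u(y)$ is continuous under the compact embedding $W^{1,p}_0(0,\pi_p)\hookrightarrow L^\infty(0,\pi_p)$.

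Next I would compute the first variation exactly as in Lemma~\ref{lemuy}. Admissible variations $\varphi$ vanish at $0$, $y$, and $\pi_p$, so integration by parts annihilates every boundary term, and $J'(u_y)[\varphi]=0$ forces $u_y$ to satisfy the Euler--Lagrange equations \eqref{left} on $(0,y)$ and \eqref{right} on $(y,\pi_p)$. This derivation is insensitive to the sign of $\lambda$, so it transfers without change.

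The only genuinely new ingredient is the explicit solution when $\lambda<0$. Writing $\lambda=-(p-1)K^p=-\lambda_1K^p$, which is exactly the relation encoded in $K=(-\lambda/\lambda_1)^{1/p}$, the equation in \eqref{left} becomes $\left(|u'|^{p-2}u'\right)'-(p-1)K^p|u|^{p-2}u=0$. Using \eqref{ssh2} and the substitution $t=Kx$, I would check that $x\mapsto\sinh_p(Kx)$ solves this equation; since the operator is homogeneous of degree $p-1$, so does $C_1\sinh_p(Kx)$ for every constant $C_1$. By uniqueness of the initial value problem this is the only solution vanishing at $x=0$, and the reflection $x\mapsto\pi_p-x$ gives $C_2\sinh_p(K(\pi_p-x))$ as the only solution of \eqref{right} vanishing at $x=\pi_p$.

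Finally, the normalization $u_y(y)=1$ fixes the constants. Because $\sinh_p$ is increasing with $\sinh_p(0)=0$, both $\sinh_p(Ky)$ and $\sinh_p(K(\pi_p-y))$ are strictly positive for $y\in(0,\pi_p)$, so $C_1=(\sinh_p(Ky))^{-1}$ and $C_2=(\sinh_p(K(\pi_p-y)))^{-1}$ are well defined and yield precisely \eqref{ush}. I expect no serious obstacle; the only step requiring genuine care is the homogeneity-and-scaling argument converting \eqref{ssh2} into the correctly scaled equation, since the one-dimensional $p$-Laplacian has no superposition principle and one must invoke IVP uniqueness in place of a general solution.
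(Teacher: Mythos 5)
Your proposal is correct and follows essentially the same route as the paper, which simply cites the argument of Lemma \ref{lemuy} (via Lemma \ref{lemuy2=}): existence of a minimizer as in Lemma \ref{lem1}, the first-variation computation leading to \eqref{left} and \eqref{right}, and the explicit solution $C\sinh_p(Kx)$ obtained from \eqref{ssh2} by the scaling $K=(-\lambda/\lambda_1)^{1/p}$ together with homogeneity and IVP uniqueness. Nothing further is needed.
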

\begin{proof}
The proof is same as that of Lemma \ref{lemuy2=}.
\end{proof}
\begin{lem}\label{leminteg2}
For any $z \in {\Bbb R}$,
\begin{align*}
\int_{0}^{z}\left(\cosh_{p}t\right)^{p}\,dt&=\frac{1}{p}\left(
(p-1)z+(\cosh_{p}z)^{p-1}\sinh_{p}z
\right),\\
\int_{0}^{z}\left|\sinh_{p}t\right|^{p}\,dt&=\frac{1}{p}\left(
-z+(\cosh_{p}z)^{p-1}\sinh_{p}z
\right).
\end{align*}
\end{lem}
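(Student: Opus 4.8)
The plan is to mirror the proof of Lemma~\ref{lemI1I2}, replacing the generalized trigonometric identities by their hyperbolic counterparts. I would set
\begin{align*}
H_1(z)=\int_0^z (\cosh_p t)^p\,dt, \qquad H_2(z)=\int_0^z |\sinh_p t|^p\,dt.
\end{align*}
The first ingredient is the analogue of \eqref{I1I2-1}: integrating the $p$-Pythagorean-like identity \eqref{minusone} over $[0,z]$ gives $H_1(z)-H_2(z)=z$, where the subtraction (rather than the addition appearing in \eqref{I1I2-1}) reflects the minus sign in \eqref{minusone}.

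The second ingredient replaces \eqref{diff3}. From \eqref{diff2h} and \eqref{ssh2}, the function $\sinh_p$ satisfies $(|u'|^{p-2}u')'=(p-1)|u|^{p-2}u$, so that
\begin{align*}
\left((\cosh_p x)^{p-1}\right)'=(p-1)|\sinh_p x|^{p-2}\sinh_p x,
\end{align*}
where I use that $\cosh_p x\geq 1>0$ for all $x$ by \eqref{minusone}, so that no absolute value on the cosine term is needed and one may write $|\cosh_p x|^{p-2}\cosh_p x=(\cosh_p x)^{p-1}$. Note that the plus sign here again originates from the minus sign in \eqref{ssh2}.

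With these two facts in hand I would integrate $H_2(z)$ by parts exactly as in \eqref{I1I2-2}: writing $|\sinh_p t|^p=|\sinh_p t|^{p-2}(\sinh_p t)^2$ and using the derivative formula to antidifferentiate the factor $|\sinh_p t|^{p-2}\sinh_p t$, together with $(\sinh_p t)'=\cosh_p t$ and the vanishing of the boundary term at $0$, I obtain
\begin{align*}
H_2(z)=\frac{(\cosh_p z)^{p-1}\sinh_p z}{p-1}-\frac{H_1(z)}{p-1}.
\end{align*}
Combining this with $H_1(z)-H_2(z)=z$ yields a linear system in $H_1(z)$ and $H_2(z)$, whose solution gives the two stated formulas.

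Since every step is a direct transcription of the trigonometric argument, I expect no genuine obstacle. The only points requiring care are the two sign flips (in the identity \eqref{minusone} and in the governing equation \eqref{ssh2}) and the observation that $\cosh_p x>0$, which lets one dispense with the absolute value on the cosine term that was present in the trigonometric case.
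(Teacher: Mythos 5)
Your proposal is correct and follows essentially the same route as the paper: the paper likewise sets $I_1(z)=\int_0^z(\cosh_p t)^p\,dt$, $I_2(z)=\int_0^z|\sinh_p t|^p\,dt$, derives $I_1(z)-I_2(z)=z$ from \eqref{minusone} and $\left((\cosh_p x)^{p-1}\right)'=(p-1)|\sinh_p x|^{p-2}\sinh_p x$ from \eqref{diff2h} and \eqref{ssh2}, and then integrates by parts to obtain the same linear relation. Your additional remarks on the sign flips and on $\cosh_p x>0$ are accurate and consistent with the paper's computation.
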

\begin{proof}
Put 
\begin{align*}
I_{1}(z)=\int_{0}^{z}\left(\cosh_{p}t\right)^{p}\,dt,\quad I_{2}(z)=\int_{0}^{z}\left|\sinh_{p}t\right|^{p}\,dt.
\end{align*}
Then we have from \eqref{minusone}, 
\begin{align}\label{I1-I2}
I_{1}(z)-I_{2}(z)=z.
\end{align}
From \eqref{diff2h} and \eqref{ssh2}, we have 
\begin{align*}
\left(\left(\cosh_{p}x\right)^{p-1}\right)'=(p-1)\left|\sinh_{p}x\right|^{p-2}\sinh_{p}x,
\end{align*}
hence 
\begin{align}\label{I2h}
I_{2}(z)&=\int_{0}^{z}\left|\sinh_{p}t\right|^{p-2}(\sinh_{p}t)^2\,dt=\int_{0}^{z}\frac{\left(\left(\cosh_{p}t\right)^{p-1}\right)'\sinh_{p}t}{p-1}\,dt\\
&=\left[\frac{\left(\cosh_{p}t\right)^{p-1}}{p-1}\sinh_{p}t\right]_{0}^{z}-\int_{0}^{z}\frac{\left(\cosh_{p}t\right)^{p-1}}{p-1}\cosh_{p}t\,dt\nonumber\\
&=\frac{\left(\cosh_{p}z\right)^{p-1}\sinh_{p}z}{p-1}-\frac{I_{1}(z)}{p-1}.\nonumber
\end{align}
From \eqref{I1-I2} and \eqref{I2h}, we obtain the formulas.
\end{proof}
\begin{lem}
\label{hF}
For $y \in (0,\pi_p/2]$, the function $F$ is expressed as follows:
\begin{align*}
F(y)=K^{p-1}\left(\left(\coth_{p}Ky\right)^{p-1}+\left(\coth_{p}K(\pi_{p}-y)
\right)^{p-1}\right).
\end{align*}
\end{lem}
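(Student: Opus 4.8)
The plan is to mirror exactly the proof of Lemma \ref{lemF}, replacing the generalized trigonometric functions by their hyperbolic counterparts and invoking the integral identities of Lemma \ref{leminteg2} in place of Lemma \ref{lemI1I2}. Since $F(y)=J(u_y)$ with $u_y$ the minimizer supplied by Lemma \ref{lemuy2}, I would split
\[
F(y)=\int_0^y\left(|u_y'|^p-\lambda|u_y|^p\right)dx+\int_y^{\pi_p}\left(|u_y'|^p-\lambda|u_y|^p\right)dx
\]
and evaluate the two pieces separately.

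For the first integral, on $(0,y)$ one has $u_y(x)=\sinh_p Kx/\sinh_p Ky$, hence $u_y'(x)=K\cosh_p Kx/\sinh_p Ky$ by \eqref{diff2h}; the substitution $t=Kx$ then reduces it to
\[
\frac{K^{p-1}}{(\sinh_p Ky)^p}\int_0^{Ky}(\cosh_p t)^p\,dt-\frac{\lambda K^{-1}}{(\sinh_p Ky)^p}\int_0^{Ky}|\sinh_p t|^p\,dt .
\]
Next I would insert the two formulas of Lemma \ref{leminteg2} with $z=Ky$ and use the defining relation $-\lambda=\lambda_1 K^p=(p-1)K^p$, i.e.\ $\lambda K^{-1}=-(p-1)K^{p-1}$. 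The terms linear in $z$, namely $(p-1)Ky$ and $-Ky$, then cancel, and the surviving contribution collapses to $K^{p-1}(\cosh_p Ky)^{p-1}\sinh_p Ky/(\sinh_p Ky)^p=K^{p-1}(\coth_p Ky)^{p-1}$.

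The second integral is handled identically after the substitution $s=K(\pi_p-x)$, which recasts it as the same expression with $z=K(\pi_p-y)$ and yields $K^{p-1}(\coth_p K(\pi_p-y))^{p-1}$; adding the two contributions gives the claimed formula. The calculation is routine, so the only points needing care are bookkeeping ones: getting the sign of $\lambda$ correct so that the $z$-terms cancel (this is the single place where the hypothesis $\lambda<0$ enters, through $-\lambda=\lambda_1 K^p$), and noting that, unlike in Lemma \ref{lemF}, no absolute-value/odd-power correction is required here, since for $y\in(0,\pi_p/2]$ and $K>0$ both arguments $Ky$ and $K(\pi_p-y)$ are positive, so $\sinh_p$ and $\cosh_p$ are positive and $\coth_p$ is a genuine positive quantity. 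I do not anticipate a substantive obstacle; the main risk is merely an algebraic slip in the cancellation step.
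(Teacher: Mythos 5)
Your proposal is correct and follows essentially the same route as the paper: split $F(y)=J(u_y)$ at $x=y$, substitute $t=Kx$ (resp.\ $s=K(\pi_p-x)$), and apply the integral formulas of Lemma \ref{leminteg2}, with the linear-in-$z$ terms cancelling because $-\lambda=(p-1)K^p$. The extra remarks about sign bookkeeping and positivity of $\coth_p$ on the relevant range are accurate but not needed beyond what the paper's computation already carries out.
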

\begin{proof}
Since $F(y)=J(u_y)$, we have
\begin{align*}
F(y)=\int^{y}_{0}\left(\left|u'_{y}(x)\right|^{p}-\lambda \left|u_{y}(x)\right|^{p}\right)\,dx+
\int^{\pi_p}_{y}\left(\left|u'_{y}(x)\right|^{p}-\lambda \left|u_{y}(x)\right|^{p}\right)\,dx,
\end{align*}
where $u_{y}$ is as \eqref{ush} in Lemma \ref{lemuy2}.
By Lemma \ref{leminteg2}, the first term in the right-hand side is expressed as
\begin{align*}
\int^{y}_{0}\left(\left|u'_{y}(x)\right|^{p}\right.
&\left.-\lambda \left|u_{y}(x)\right|^{p}\right)\,dx\\
&=\frac{1}{\left(\sinh_{p}Ky\right)^{p}}\int_{0}^{y}\left(K\cosh_{p}Kx\right)^{p}\,dx-\frac{\lambda}{\left(\sinh_{p}Ky\right)^{p}}
\int_{0}^{y}\left(\sinh_{p}Kx\right)^{p}\,dx\\
&=\frac{K^{p-1}}{\left(\sinh_{p}Ky\right)^{p}}\int_{0}^{Ky}\left(\cosh_{p}t\right)^{p}\,dt
-\frac{\lambda K^{-1}}{\left(\sinh_{p}Ky\right)^{p}}\int_{0}^{Ky}\left(\sinh_{p}t\right)^{p}\,dt\\
&=K^{p-1}\left(\coth_{p}Ky\right)^{p-1}.
\end{align*}
Similarly, we obtain
\begin{align*}
\int^{\pi_p}_{y}\left(\left|u'_{y}(x)\right|^{p}-\lambda \left|u_{y}(x)\right|^{p}\right)dx
=K^{p-1}\left(\coth_{p}K(\pi_{p}-y)\right)^{p-1}.
\end{align*}
This completes the proof.
\end{proof}
\begin{lem}
The infimum of $F(y)$, $y \in (0,\pi_p/2]$, is attained at $y=\pi_{p}/2$.
Moreover,
$$F(\pi_p/2)=2K^{p-1}\left(\coth_{p}\left(\frac{K\pi_{p}}{2}\right)\right)^{p-1}
=C(p,\lambda).$$
\end{lem}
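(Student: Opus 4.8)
The plan is to follow the template of the proof of Lemma~\ref{lemmin}, writing $F(y)=g(y)+g(\pi_p-y)$ with
\[
g(x)=K^{p-1}\left(\coth_p Kx\right)^{p-1},\qquad x\in(0,\infty),
\]
so that $F'(y)=g'(y)-g'(\pi_p-y)$ for $y\in(0,\pi_p/2]$ (note that both $y$ and $\pi_p-y$ stay in $(0,\infty)$, where $\coth_p$ is smooth and positive). First I would compute $g'$, then read off its monotonicity, and finally conclude that $F$ is strictly decreasing on $(0,\pi_p/2)$.

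The key step is the hyperbolic analog of \eqref{cot}. From $\sinh_p'=\cosh_p$ and the equation \eqref{ssh2} one gets, for $x>0$,
\[
\left((\cosh_p x)^{p-1}\right)'=(p-1)(\sinh_p x)^{p-1},
\qquad\text{hence}\qquad
\cosh_p' x=\frac{(\sinh_p x)^{p-1}}{(\cosh_p x)^{p-2}}.
\]
Substituting this into the quotient rule for $\coth_p=\cosh_p/\sinh_p$ and simplifying with the identity \eqref{minusone}, $(\cosh_p x)^p-(\sinh_p x)^p=1$, I expect to obtain
\[
g'(x)=-\frac{(p-1)K^p}{(\sinh_p Kx)^p}<0,\qquad x\in(0,\infty),
\]
which is exactly \eqref{cot} with $\sin_p$ replaced by $\sinh_p$.

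From here the conclusion is immediate and in fact simpler than in the trigonometric case. Since $\sinh_p$ is monotone increasing on $(0,\infty)$, the denominator $(\sinh_p Kx)^p$ increases, so $g'$ is increasing on all of $(0,\infty)$; equivalently $g$ is globally convex there. For $y\in(0,\pi_p/2)$ we have $y<\pi_p-y$, whence $F'(y)=g'(y)-g'(\pi_p-y)<0$. Thus $F$ is strictly decreasing on $(0,\pi_p/2)$ and attains its infimum over $(0,\pi_p/2]$ at $y=\pi_p/2$, where
\[
F(\pi_p/2)=2g(\pi_p/2)=2K^{p-1}\left(\coth_p\left(\frac{K\pi_p}{2}\right)\right)^{p-1}=C(p,\lambda).
\]

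I expect the only real obstacle to be the derivative computation: this is where the Pythagorean-type identity \eqref{minusone} enters and where a sign slip would reverse the monotonicity of $g'$, so I would check that bookkeeping carefully. Notably, in contrast to the proof of Lemma~\ref{lemmin}, here $g$ is convex on the whole interval $(0,\infty)$, so no splitting into convex and concave regions and no reflection symmetry of $g'$ about a midpoint is required --- the global monotonicity of $g'$ does all the work.
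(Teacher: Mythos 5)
Your proposal is correct and rests on exactly the same key facts as the paper's proof: the decomposition $F(y)=f(y)+f(\pi_p-y)$ with $f(x)=K^{p-1}(\coth_p Kx)^{p-1}$ and the derivative formula $f'(x)=-(p-1)K^p/(\sinh_p Kx)^p$, which is increasing on $(0,\infty)$ so that $f$ is globally convex. The only (inessential) difference is the last step: the paper concludes via Jensen's inequality, $f(y)+f(\pi_p-y)\geq 2f(\pi_p/2)$, whereas you conclude from $F'(y)=f'(y)-f'(\pi_p-y)<0$ on $(0,\pi_p/2)$; both follow immediately from the same convexity, and your observation that no convex/concave splitting is needed (unlike the trigonometric case) matches the paper.
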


\begin{proof}
Let $f(x)=K^{p-1}(\coth_p{Kx})^{p-1}$. Since
$$f'(x)=-\frac{(p-1)K^p}{(\sinh_p{Kx})^p}$$
is increasing in $(0,\infty)$, 
the function $f(x)$ is convex in $(0,\infty)$.
Thus, by Lemma \ref{hF} and Jensen's inequality, we have
\begin{align*}
F(y)=f(y)+f(\pi_p-y) \geq 2f\left(\frac{y+\pi_p-y}{2}\right)
=2 K^{p-1}\left(\coth_p{\frac{K\pi_p}{2}}\right)^{p-1}.
\end{align*}
The equality holds when $y=\pi_p-y$, i.e., $y=\pi_p/2$.
\end{proof}


Similar argument to Proposition \ref{propC} yields Proposition \ref{propD}.

\begin{rem}
Contrary to the case $0<\lambda<\lambda_1$ (see Remark \ref{rem3}), we can show $u_{y}\in M(y)$ for all $\lambda<0$.
Indeed, it is obvious that $(\sinh_{p}Kx)/(\sinh_{p}Ky)$ is monotone increasing on $(0,y]$ and $(\sinh_{p}K(\pi_{p}-x))/(\sinh_{p}(\pi_{p}-y))$ is monotone decreasing on $[y,\pi_{p}/2]$.
Therefore, $\max_{x\in (0,\pi_{p}/2]}u_{y}(x)=u_{y}(y)=1$.
\end{rem}


\end{document}